\theoremstyle{plain}
\newtheorem{theorem}{Theorem}[section]
\newtheorem{lemma}[theorem]{Lemma}
\newtheorem{proposition}[theorem]{Proposition}
\newtheorem{claim}[theorem]{Claim}
\theoremstyle{remark}
\newtheorem{remark}[theorem]{Remark}
\numberwithin{equation}{section}
\numberwithin{theorem}{section}
\DeclareMathOperator{\supp}{supp}
\begin{document}

\subjclass[2010]{Primary 42B20. Secondary 47B38.} \keywords{bilinear
Hilbert transforms, bilinear maximal functions, plane curves}

\date{}

\title[Bilinear Hilbert Transforms]{Bilinear Hilbert transforms associated to plane curves}

\author[J.W. Guo and L. Xiao]{Jingwei Guo \  \  Lechao Xiao}

\address{Jingwei Guo\\Department of Mathematics\\
University of Illinois at Urbana-Champaign\\
Urbana, IL 61801, USA}

\email{jwguo@illinois.edu}

\address{Lechao Xiao \\Department of Mathematics\\
University of Illinois at Urbana-Champaign\\
Urbana, IL 61801, USA}

\email{xiao14@illinois.edu}

\thanks{}

\begin{abstract}
We prove that the bilinear Hilbert transforms and maximal functions
along certain general plane curves are bounded from
$L^2(\mathbb{R})\times L^2(\mathbb{R})$ to $L^1(\mathbb{R})$.
\end{abstract}

\maketitle


%
%
%
%

\section{Introduction}\label{introduction}

Since the initial breakthroughs for singular integrals along curves
and surfaces by Nagel, Rivi\`{e}re, Stein, Wainger, et al., in the
1970s (see for example \cite{NRW1974-1976} and \cite{SW1978} for
some of their works on Hilbert transforms along curves), extensive
research in this area of harmonic analysis has been done and a great
many fascinating and important results have been established, which
culminate in a general theory of singular Radon transforms (see for
instance Christ, Nagel, Stein, and Wainger~\cite{CNSW1999}).

Another attractive area, parallel to the above one, is the bilinear
extension of the classical Hilbert transform. The boundedness of
such bilinear transforms was conjectured by Calder\'on and motivated
by the study of the Cauchy integral on Lipschitz curves. In the
1990s, this conjecture was verified by Lacey and Thiele in a
breakthrough pair of papers \cite{ML97, ML99}. In their works, a
systematic and delicate method was developed, inspired by the famous
works of Carleson \cite{CA66} and Fefferman \cite{Fe73}, which is
nowadays referred as the method of time-frequency analysis. Over the
past two decades, this method has merged as a powerful analytic tool
to handle problems that are related to multilinear analysis.

We are interested in the study of bilinear/multilinear singular
integrals along curves and surfaces--a problem that is closely
related to the two areas above. (We refer the readers to
Li~\cite{Li2013} for connections of this problem with ergodic theory
and multilinear oscillatory integrals.) To begin with, we consider a
model case--the truncated bilinear Hilbert transforms along plane
curves. One formulation of the problem is as follows.

Let $\Gamma(t)=(t, \gamma(t)):(-1, 1)\rightarrow \mathbb{R}^2$ be a
curve in $\mathbb{R}^2$. To $\Gamma$ we associate the truncated
bilinear Hilbert transform operator $H_{\Gamma}$ given by the
principal value integral
\begin{equation}
H_{\Gamma}(f, g)(x)=\int_{-1}^{1} \! f(x-t)g(x-\gamma(t)
)t^{-1}\,\mathrm{d}t  \qquad (x\in \mathbb{R}),
\label{bilinearHilbert}
\end{equation}
where $f$ and $g$ are Schwartz functions on $\mathbb{R}$. When the
function $\gamma$ has certain curvature (or ``non-flat'')
conditions, the boundedness properties of this operator ({\it e.g.}
whether it is bounded from $L^{p_1}(\mathbb{R})\times
L^{p_2}(\mathbb{R})$ to $L^r(\mathbb{R})$ for certain $p_1$, $p_2$,
and $r$) are of great interest to us.

Li~\cite{Li2013} studied such an operator (the integral defining
$H_{\Gamma}(f, g)(x)$ in \cite{Li2013} is over $\mathbb{R}$) with
the curve being defined by a monomial ({\it i.e.} $\gamma(t)=t^d$,
$d\in\mathbb{N}$, $d\geq2$) and proved that it is bounded from
$L^2(\mathbb{R})\times L^2(\mathbb{R})$ to $L^1(\mathbb{R})$. In his
proof, he combined results and tools from both time-frequency
analysis and oscillatory integral theory and used ingeniously a
uniformity concept (the so-called $\sigma$-uniformity; see
\cite[Section 6]{Li2013}). Lie~\cite{Lie2011} improved Li's results
both qualitatively, by extending monomials to more general curves
(certain ``slow-varying'' curves with extra curvature assumptions),
and quantitatively,  by improving the estimates. Instead of using
Li's method of $\sigma$-uniformity, Lie used a Gabor frame
decomposition to discretize certain operators in a smart way and
then worked with the discretized operators which have variables
separated on the frequency side and preserve certain main
characteristics (see the appendix of \cite{Lie2011} for a detailed
comparison between their methods).

Another interesting aspect of this problem was considered by Li and
the second author \cite{Li-Xiao}, in which they studied the case
when the curve is defined by a polynomial with different emphasis of
getting bounds uniform in coefficients of the polynomial and the
full range of indices $(p_1,p_2,r)$. They provided complete answers
(except to the endpoint case) for $H_\Gamma$ when the polynomial is
``non-flat''  near the origin ({\it i.e.} without the linear term).
When the polynomial has a linear term, however,  the full range of
indices for the corresponding uniform estimates is extremely
difficult to find and remains open.

In this paper we consider a family of general curves and provide an
easy-to-check criterion for a curve whose associated bilinear
Hilbert transform is bounded from $L^2(\mathbb{R})\times
L^2(\mathbb{R})$ to $L^1(\mathbb{R})$ (for the precise statement of
our results, see Section \ref{statement-theorem}). We use the
methods that are used in Li~\cite{Li2013} (with modifications).
Although we use different methods from Lie~\cite{Lie2011}, the
curves we consider are similar and the estimates we get are the
same.

Our criterion, motivated by results in Lie~\cite{Lie2011} and Nagel,
Vance, Wainger, and Weinberg~\cite{NVWW1983}, mainly asks one to
check whether certain bounds of various expressions involving
derivatives of a quotient are satisfied.  In \cite{NVWW1983} a
simple necessary and sufficient condition is provided (among other
results) for the $L^2$-boundedness of the Hilbert transform along
the curve $\Gamma$ with an odd function $\gamma$ and it is expressed
in terms of an auxiliary function $h(t)=t\gamma'(t)-\gamma(t)$. Both
Lie's and our results indicate that an appropriate replacement for
$h$ in the bilinear setting might be in the form of  a quotient (see
the $Q_{\epsilon}(t)$ defined in Section \ref{statement-theorem}).
We still do not know whether our criterion is a necessary condition.

In our main estimates in Section \ref{main-estimates}, we apply the
$TT^*$ method both in frequency space (with an extra size
restriction $|\gamma'(2^{-j})|>2^{-m}$) and in time space (with an
extra restriction on the function space), then we combine both
results to get the fast decay needed in proving the boundedness of
the desired operator. Since we are considering general curves, we
pay special attention to the dependence (on $\gamma$, $j$, $m$,
etc.) of all bounds we get, especially to those occurring when we
apply a quantitative version of the method of stationary phase.

We also establish analogous results for the bilinear maximal
function along $\Gamma$ (defined below) by using the arguments from
\cite[Section 7]{Li-Xiao} and our main estimates.
\begin{equation}
M_\Gamma(f,g)(x) =\sup_{0<\epsilon <1}\epsilon^{-1}
\int_{0}^\epsilon \! |f(x-t) g(x-\gamma(t))|\,\mathrm{d}t  \qquad
(x\in \mathbb{R}).  \label{mm99}
\end{equation}
We note that such an operator along a ``non-flat'' polynomial was
already carefully studied in \cite{Li-Xiao}. Much deeper and more
elegant results for a linear curve can be found in
Lacey~\cite{LA00}.

{\it Notations:} The Fourier transform of $f$ is
$\widehat{f}(\xi)=\mathcal{F}[f](\xi)=\int_{\mathbb{R}} \! f(x)
e^{-2\pi i\xi x} \,\textrm{d} x$ and its inverse Fourier transform
is $\mathcal{F}^{-1}[g](x)=\int_{\mathbb{R}} \! g(\xi) e^{2\pi i\xi
x} \,\textrm{d} \xi$. Let $\textbf{1}_{a, n}$ be the indicator
function of interval $a\cdot[n, n+1)$  for $a, n\in\mathbb{R}$ and
$1_{I}$ the indicator function of interval $I$. The indices
$(p_1,p_2,r)$ are always assumed to satisfy $1/p_1+1/p_2=1/r$,
$p_1>1$, $p_2>1$, and $r>1/2$. We use $C$ to denote an absolute
constant which may be depending on the curve and different from line
to line.

%
%
%
%
%
%

\section{Statement of theorems}\label{statement-theorem}

For any $a\in\mathbb{R}$, we say that a curve $\Gamma(t)=(t,
\gamma(t)+a):(-1, 1)\rightarrow \mathbb{R}^2$ (\footnote{In the
problems considered in this paper, we can always remove the constant
$a$ from the definition of $\Gamma$ by a translation argument, hence
there is no need to specify the dependence of $\Gamma$ on $a$ and we
will always let $a=0$.}) belongs to a family of curves,
$\textbf{F}(-1, 1)$, if the function $\gamma$ satisfies the
following conditions \eqref{origin-condition1}-\eqref{mt}. There
exists a constant $0<A_1<1/2$ such that on $(-A_1, A_1)\setminus
\{0\}$ the function $\gamma$ is of class $C^N$ ($N\geq 5$) and
$\gamma'\neq 0$. Let $Q_{\epsilon}(t)=\gamma(\epsilon
t)/\epsilon\gamma'(\epsilon)$. For $0<|\epsilon|<c_0<A_1/4$ and
$1/4\leq |t|\leq 4$, we have
\begin{equation}
|D^j Q_{\epsilon}(t)|\leq C_1, \quad 0\leq j\leq N,
\label{origin-condition1}
\end{equation}
\begin{equation}
|D^2 Q_{\epsilon}(t)|\geq c_1, \label{origin-condition2}
\end{equation}
(\footnote{The condition \eqref{origin-condition2} implies that
there exist constants $K_1, K_2>0$ such that
\begin{equation*}
|\gamma'(\epsilon)|\leq K_1|\epsilon|^{c_1}\quad \textrm{for
$0<|\epsilon|<c_0$}
\end{equation*}
or
\begin{equation*}
|\gamma'(\epsilon)|\geq K_2|\epsilon|^{-c_1}\quad \textrm{for
$0<|\epsilon|<c_0$}.
\end{equation*}
See also Lie~\cite[P.~4]{Lie2011} Observation (6) and (7).}) and
\begin{equation}
|(D^2Q_{\epsilon})^2(t)-D^1Q_{\epsilon}(t)D^3Q_{\epsilon}(t)|\geq
c_2, \quad \textrm{if $|\gamma'(\epsilon)|\leq
K_1|\epsilon|^{c_1}$,}\label{origin-condition3}
\end{equation}
or
\begin{equation}\tag{\ref{origin-condition3}$'$}
|2(D^2Q_{\epsilon})^2(t)-D^1Q_{\epsilon}(t)D^3Q_{\epsilon}(t)|\geq
c_3, \quad \textrm{if $|\gamma'(\epsilon)|\geq
K_2|\epsilon|^{-c_1}$.} \label{origin-condition3-prime}
\end{equation}
Let $\Delta_j=|2^{-j}\gamma'(2^{-j})|^{-1}$. If
$\gamma''(\epsilon)\gamma'(\epsilon)<0$ for $0<\epsilon<c_0$, then
there exist $K_3\in\mathbb{Z}$ and $K_4\in\mathbb{N}$ such that
\begin{equation}
\Delta_{j+K_3} \geq 2 \Delta_j, \quad \textrm{if $j\geq K_4$}.
\label{mt}
\end{equation}

\begin{theorem} \label{main-theorem}
If $\Gamma\in \textbf{F}(-1, 1)$, then $H_{\Gamma}(f, g)$ can be
extended to a bounded operator from $L^2(\mathbb{R})\times
L^2(\mathbb{R})$ to $L^1(\mathbb{R})$.
\end{theorem}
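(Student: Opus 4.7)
My plan is to adapt the time-frequency / stationary-phase strategy of Li~\cite{Li2013} to the general-curve setting, using the uniform bounds \eqref{origin-condition1}--\eqref{origin-condition3-prime} on the renormalized quotient $Q_\epsilon$ to play the role that trivial scale invariance plays for the monomial $\gamma(t)=t^d$. To begin, fix a smooth bump $\rho\in C_c^\infty$ with $\supp\rho\subset\{1/2\le|t|\le 2\}$ and $\sum_{j\ge j_0}\rho(2^j t)=1$ on $(-c_0,c_0)\setminus\{0\}$, and decompose
\begin{equation*}
H_\Gamma(f,g)(x)=\sum_{j\ge j_0}H_j(f,g)(x)+E(f,g)(x),
\end{equation*}
where $H_j(f,g)(x)=\int f(x-t)g(x-\gamma(t))\rho(2^j t)t^{-1}\,\mathrm{d}t$ and $E$ gathers the contribution from $|t|\ge c_0$, which is a standard bilinear Hilbert transform along a smooth curve with $\gamma'\ne 0$. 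Changing variables $t\mapsto 2^{-j}t$ in each $H_j$, Fourier-inverting $f$ and $g$, and writing $\gamma(2^{-j}t)=2^{-j}\gamma'(2^{-j})\,Q_{2^{-j}}(t)$ recasts $H_j$ as an oscillatory integral in $t,\xi,\eta$ whose phase is entirely governed by $Q_{2^{-j}}$, for which $j$-independent estimates are available by hypothesis.

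The main estimates are then organized by decomposing $f=\sum_{n_1}f_{n_1}$, $g=\sum_{n_2}g_{n_2}$ via Littlewood--Paley (so $\widehat{f_{n_1}}$ is supported in $|\xi|\sim 2^{n_1}$, $\widehat{g_{n_2}}$ in $|\eta|\sim 2^{n_2}$) and by introducing a parameter $m\ge 0$ with $|\gamma'(2^{-j})|\sim 2^{-m}$, so that $\Delta_j\sim 2^{j+m}$. The goal is an estimate of the form
\begin{equation*}
\|H_j(f_{n_1},g_{n_2})\|_{L^1}\le C\,2^{-\alpha(j,m,n_1,n_2)}\|f_{n_1}\|_{L^2}\|g_{n_2}\|_{L^2}
\end{equation*}
whose exponent $\alpha$ is positive and doubly summable over $(n_1,n_2)$ and further summable over $(j,m)$. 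The quadruples split according to whether the phase $\xi t+\eta\gamma(t)$ has a stationary point on $|t|\sim 2^{-j}$: the non-resonant regime is handled by the $TT^*$ method in frequency space, where the kernel of $H_jH_j^*$ is an oscillatory integral whose Hessian in the remaining variable is controlled below by the curvature hypothesis \eqref{origin-condition2} and whose second-order stationary behavior is governed by the Wronskian-type conditions \eqref{origin-condition3} and \eqref{origin-condition3-prime} (corresponding respectively to the scaling regimes $|\gamma'(\epsilon)|\lesssim|\epsilon|^{c_1}$ and $|\gamma'(\epsilon)|\gtrsim|\epsilon|^{-c_1}$). A quantitative stationary-phase bound, applied with explicit constants coming from \eqref{origin-condition1}, then produces the desired decay in $n_1,n_2$.

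The resonant regime is handled complementarily by $TT^*$ in time space, after an extra restriction on the function space that localizes $f,g$ to slabs commensurate with $\Delta_j$; the narrow time support of the resulting kernel opens the way to Li's $\sigma$-uniformity dichotomy, in which $f_{n_1}$ is decomposed into a piece orthogonal to all relevant model linear phases (handled by Plancherel) and a piece with nontrivial correlation against one such phase (iterated into a paraproduct-type sum). Combining the two estimates, summing over $n_1,n_2,m$ via Cauchy--Schwarz on the Littlewood--Paley pieces, and invoking \eqref{mt} to enforce summability over $j$ in the delicate case $\gamma''\gamma'<0$ (where $\Delta_j$ is not a priori geometric in $j$) completes the argument. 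The principal obstacle throughout is uniformity in $j$: naive stationary-phase or $TT^*$ constants involve $C^k$-norms of $\gamma$ on $[2^{-j-1},2^{-j}]$, which grow with $j$, and the quotient $Q_\epsilon$ is engineered precisely so that every such norm, and every relevant lower bound, is absorbed into the fixed constants in \eqref{origin-condition1}--\eqref{origin-condition3-prime}. Book-keeping these dependencies, and verifying that \eqref{origin-condition3} and \eqref{origin-condition3-prime} each suffice for stationary-phase on $H_jH_j^*$ in its respective scaling regime, is where the bulk of the technical work lies.
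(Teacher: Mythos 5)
Your plan captures the core of the argument for the \emph{resonant} (major) part: dyadic decomposition of the kernel, rescaling so that the phase is governed by $Q_{2^{-j}}$ and hence by the uniform constants in \eqref{origin-condition1}--\eqref{origin-condition3-prime}, a $TT^*$ estimate in frequency invoking H\"ormander's oscillatory-integral bound together with the Wronskian conditions, a complementary $TT^*$ estimate in time filtered through Li's $\sigma$-uniformity dichotomy, and a balance between the two. That matches the paper's Sections~\ref{phase-function}--\ref{sec5}. However, there are two genuine problems.

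First, you mischaracterize the role of the two $TT^*$ estimates. They are \emph{not} a split between stationary and non-stationary phase. Both are applied where the phase \emph{does} have a critical point (the major part $\mathfrak m_{j,+,+}$). The actual dichotomy is quantitative: the frequency-side $TT^*$ gives decay $(2^m|\gamma'(\epsilon_j)|)^{-1/6}$, which is useful only when $|\gamma'(\epsilon_j)|>2^{-m}$; the time-side $TT^*$ (with the localization and $\sigma$-uniformity reduction) is useful when $|\gamma'(\epsilon_j)|$ is small. Proposition~\ref{prop4-1} takes the minimum of the two. Your framing in terms of resonance also hides the other awkward point: you set $m$ by $|\gamma'(2^{-j})|\sim 2^{-m}$, turning $m$ into a function of $j$; in the paper $m$ is a free Littlewood--Paley index and the condition $|\gamma'(\epsilon_j)|\gtrless 2^{-m}$ is a comparison between two independent parameters. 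With your identification it is no longer clear how the two-regime estimate in \eqref{prop4-1-1} arises.

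Second, and more seriously, the \emph{non-resonant} (minor) regions are left essentially unaddressed. Saying ``summing over $n_1,n_2,m$ via Cauchy--Schwarz'' does not close this case. On the off-diagonal frequency pieces the multiplier $\mathfrak m_j(\xi,\eta)$ has no oscillatory gain that makes a direct sum over $j$ converge; instead, after Taylor or Fourier expansion, the sum over $j$ produces genuine paraproducts of the form
\begin{equation*}
\sum_{j>L}\widehat\Psi\!\left(\tfrac{\xi}{2^{j}}\right)\!\left(\tfrac{\xi}{2^{j}}\right)^{p}\widehat\Phi\!\left(\tfrac{\eta}{\Delta_j}\right)\!\left(\tfrac{\eta}{\Delta_j}\right)^{q},
\end{equation*}
and these are bounded not by orthogonality alone but by a paraproduct theorem adapted to the \emph{dyadic-like} sequence $\{\Delta_j\}$ (this is Theorem~\ref{para}, and it is precisely here that the structural hypothesis \eqref{mt} is needed as the dyadic-like condition \eqref{dl}). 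Your mention of \eqref{mt} is only for the major-part sum over $j$; you never account for the minor-part paraproduct analysis, which is a necessary and non-trivial component of the proof. Without it, the argument does not yield a bounded operator on $L^2\times L^2\to L^1$.
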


The analogous version for bilinear maximal functions is as follows.

\begin{theorem}\label{maxT}
If $\Gamma\in \textbf{F}(-1, 1)$, then $M_\Gamma(f,g)$ is a bounded
operator from $L^2(\mathbb{R})\times L^2(\mathbb{R})$ to
$L^1(\mathbb{R})$.
\end{theorem}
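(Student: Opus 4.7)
\emph{Proof proposal.} The plan is to reduce Theorem~\ref{maxT} to single-scale estimates of the type established in Section~\ref{main-estimates}, following the strategy used in \cite[Section~7]{Li-Xiao}. Fix a smooth partition of unity $\sum_{j\geq 0}\rho_j(t)=1$ on $(0,1)$ with $\supp(\rho_j)\subset[2^{-j-1},2^{-j+1}]$, and define the single-scale positive operator
$$A_j(f,g)(x)=2^j\int \rho_j(t)\,|f(x-t)|\,|g(x-\gamma(t))|\,\mathrm{d}t.$$
A standard comparison gives $M_\Gamma(f,g)(x)\le C\sup_{j\geq 0}A_j(|f|,|g|)(x)$, so it suffices to bound $\sup_j A_j$ from $L^2\times L^2$ into $L^1$. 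Linearizing the supremum by a measurable function $j_\star:\mathbb{R}\to\mathbb{Z}_{\geq 0}$ and partitioning $\mathbb{R}$ into the level sets $E_k=\{x:j_\star(x)=k\}$, it is enough to establish the summable bound $\sum_{k\geq 0}\|1_{E_k}A_k(f,g)\|_{L^1}\lesssim\|f\|_2\|g\|_2$.

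For each fixed $j$, split $f=f_j^{l}+f_j^{h}$ by a Littlewood--Paley cutoff at frequency $\sim 2^{j}$ and $g=g_j^{l}+g_j^{h}$ by a cutoff at frequency $\sim\Delta_j$, where $\Delta_j^{-1}=|2^{-j}\gamma'(2^{-j})|$ is the natural spatial scale seen by $g\circ\gamma$ when $t\sim 2^{-j}$. The ``low--low'' term $A_j(f_j^{l},g_j^{l})$ is pointwise dominated by $C\,\mathcal{M}f(x)\,\mathcal{M}g(x)$, where $\mathcal{M}$ is the Hardy--Littlewood maximal operator: $f_j^{l}$ is essentially constant on intervals of length $2^{-j}$ and $g_j^{l}$ on intervals of length $\Delta_j^{-1}$, so the rough average against $\rho_j$ can be absorbed into a product of smooth maximal averages. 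Cauchy--Schwarz and the $L^2$-boundedness of $\mathcal{M}$ then give $\|A_j(f_j^{l},g_j^{l})\|_{1}\le C\|f\|_2\|g\|_2$ uniformly in $j$, so this contribution is harmless for the linearized sum.

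The three remaining pieces (low--high, high--low, high--high) carry oscillation that can be exploited via the single-scale $TT^*$ plus stationary-phase estimates of Section~\ref{main-estimates}. Those estimates yield, for each such piece, an operator norm of the form $C\,2^{-j\delta}$ from $L^2\times L^2$ into $L^1$, with $\delta>0$ depending only on the constants appearing in \eqref{origin-condition1}--\eqref{mt}. Summing then produces a convergent geometric series $\sum_j 2^{-j\delta}\|f\|_2\|g\|_2\lesssim\|f\|_2\|g\|_2$, which dominates the linearized sum and closes the argument. The main obstacle will be verifying that the oscillatory gain proved in Section~\ref{main-estimates} for the Hilbert kernel $t^{-1}$ still supplies per-scale decay once $t^{-1}$ is replaced by the positive kernel $2^{j}\rho_j(t)$; this amounts to checking that the stationary-phase computations localize to a single scale and do not exploit the sign change of $1/t$. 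Once that point is pinned down, the argument parallels \cite[Section~7]{Li-Xiao} with no essentially new input.
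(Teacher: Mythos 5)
Your overall skeleton (reduce to dyadic single-scale averages, split each scale by frequency into ``oscillatory'' and ``non-oscillatory'' pieces, dominate the non-oscillatory piece by $\mathcal M f\cdot\mathcal M g$, and use the oscillatory gain to sum in $j$) is in the right spirit, and the paper indeed reuses the Section~\ref{main-estimates} machinery exactly as you anticipate. But there is a genuine gap at the point you yourself flagged as the ``main obstacle,'' and it is not a technical detail: \emph{there is no per-scale decay $2^{-j\delta}$ in these single-scale estimates.} Proposition~\ref{prop4-1} and Proposition~\ref{prop4-2} give bounds like $2^{-m/16}$ or $2^{-\beta m/4}$, which decay in the \emph{secondary} frequency parameter $m$ (the dyadic scale of the modulation $2^m\eta\phi_{\epsilon_j}$) but are \emph{uniform in $j$}. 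Summing your claimed geometric series in $j$ therefore has nothing to sum. In the actual proof of the major part (\eqref{6-1} in Subsection~\ref{mp08}), the sum over $j$ is closed not by scale-wise decay but by an $L^2$-orthogonality argument: after Cauchy--Schwarz one needs $\sum_{j>L}\mathbf 1_0(2^{-j-m}\gamma'(2^{-j})\eta)\le C$ uniformly in $\eta$, and this is precisely where condition~\eqref{mt} (the dyadic-like growth of $\Delta_j$) enters. Your sketch never invokes \eqref{mt}, which is the decisive curvature hypothesis for summing in $j$; without it the major-part sum simply does not converge.

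A second, smaller mismatch: the paper does not linearize the supremum with a stopping function $j_\star$. For the minor parts ($(*,**)\ne(+,+)$) it proves the \emph{pointwise} bound $\sup_{j>L}|M_{j,*,**}(f,g)(x)|\le C\,\mathcal M f(x)\mathcal M g(x)$ (Lemma~\ref{lm9}), which immediately gives the $L^1$ bound by Cauchy--Schwarz and the $L^2$-boundedness of $\mathcal M$; and for the major part it uses the crude pointwise bound $\sup_{j}\le\sum_{j}$ together with \eqref{6-1}. Also note that your four-way split low/high $\times$ low/high does not line up with the paper's decomposition $\mathfrak m_{j,+,+}$, $\mathfrak m_{j,-,-}$, $\mathfrak m_{j,+,-}$, $\mathfrak m_{j,-,+}$: in the paper only the diagonal regime $(+,+)$ (both frequencies high \emph{and} comparably sized) is treated as oscillatory; the off-diagonal ``one high, one low'' pieces behave like the low--low piece and are controlled by paraproduct/maximal-function arguments, not by stationary phase. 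Treating low--high and high--low the same way as high--high, as you propose, would not yield the needed bounds. To repair the argument you should (i) adopt the paper's multiplier decomposition, (ii) prove the pointwise maximal bound for all three non-$(+,+)$ pieces, and (iii) for $(+,+)$ bound $\sup_j$ by $\sum_j$ and reuse \eqref{6-1}, being explicit about the role of \eqref{mt} in the almost-orthogonality of the frequency annuli indexed by $j$.
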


\begin{remark}
Combining the results in this paper with the time-frequency analysis
arguments in \cite{Li-Xiao},
the $L^{p_1}(\mathbb R)\times L^{p_2}(\mathbb R)\to L^{r}(\mathbb
R)$ boundedness of $H_\Gamma$ and $M_\Gamma$ for $r<1$ may be
obtained. We do not carry the details out in this paper. The lower
bound of such $r$, as indicated in \cite[Theorem 4]{Li-Xiao}, is
closely related to the decay rate of the following sublevel set
estimates
\begin{align}\label{to55}
|\{|t|<1: |\gamma'(t)-1| <h\}|,
\end{align}
as $h\to 0^+$. In particular, if (\ref{to55}) is controlled by
$c_\nu h^{\nu}$ for some $\nu>0$ and $c_\nu>0$, then $H_\Gamma$ and
$M_\Gamma$ are expected to be bounded from $L^{p_1}(\mathbb R)\times
L^{p_2}(\mathbb R)$ to $L^{r}(\mathbb R)$ given
$r>\max\{1/(1+\nu),1/2\}$; see \cite[Theorem 4]{Li-Xiao} when
$\gamma$ is a polynomial.

\end{remark}

\begin{remark}\label{remark2-1}
(1) We will use $A_1$, $c_0$, $c_1$, $c_2$, $c_3$, $C_1$, $K_1$,
$K_2$, $K_3$, and $K_4$ throughout this paper.

(2) The condition \eqref{origin-condition1} with $j=1$ implies that
\begin{equation*}
|D^1 Q_{\epsilon}(t)|\geq 1/C_1, \quad \textrm{for
$0<|\epsilon|<c_0/4$, $1/4\leq |t|\leq 4$}.
\end{equation*}

(3) If $\gamma''(\epsilon)\gamma'(\epsilon)>0$  for
$0<\epsilon<c_0$, then \eqref{mt} always holds with $K_3=1$.

(4) We now compare our assumptions
\eqref{origin-condition1}-\eqref{mt} with Lie's assumptions (1)-(5)
in \cite[P.~3]{Lie2011}. The \eqref{mt} implies Lie's (1). The
\eqref{origin-condition1} and \eqref{origin-condition2} correspond
to Lie's (2) and (4) (the $Q''$ part) while the
\eqref{origin-condition3} essentially corresponds  to Lie's (5).

(5) Note that the curves considered here are not necessarily
differentiable at the origin (they can even have a pole). One
explanation for this phenomenon is that the bilinear Hilbert
transform possesses certain symmetry between its two functions $f$
and $g$ (as well as its two variables $\xi$ and $\eta$ on the
frequency side) that we can take advantage of to essentially
transfer the case with a pole to the case without a pole (see the
two expressions of $B_{j, m}^{\varphi}(f, g)$ at the beginning of
Section \ref{main-estimates}).

\end{remark}

\begin{remark}
Here are some curves $\Gamma(t)=(t, \gamma(t))$ that belong to
$\textbf{F}(-1, 1)$:
\begin{enumerate}
\item Those smooth curves that have contact with $t$-axis at the
origin of finite order $\geq 2$ (namely,
$\gamma(0)=\gamma'(0)=\ldots=\gamma^{(d-1)}(0)=0$, but
$\gamma^{(d)}(0)\neq 0$ for some natural number $d\geq 2$), for
example, $\gamma(t)=t^d$ if $d\geq 2$;
\item The function $\gamma$ has a pole at the origin of finite order $\geq 1$ (namely, $\gamma(t)=t^{-n}h(t)$ for
 some natural number $n\geq 1$ and some smooth function $h$ with $h(0)\neq 0$);
\item $\gamma(t)=$\ a linear combination of finitely many terms of the form
$|t|^{\alpha}|\log|t||^{\beta}$ for $\alpha, \beta\in \mathbb{R}$
and $\alpha\neq 0, 1$;
\item $\gamma(t)={\rm sgn}(t)|t|^{\alpha}$ or $|t|^{\alpha}|\log|\log|t|||^{\beta}$ for $\alpha, \beta\in \mathbb{R}$
and $\alpha\neq 0, 1$.
\end{enumerate}
\end{remark}


%
%
%
%
%
%
\section{Preliminaries}\label{phase-function}

In this section, we first study a special oscillatory integral which
occurs in later sections. The results are standard, but we include a
proof for completeness and the convenience of the readers.

Let $\rho\in C_0^{\infty}(\mathbb{R})$ be a real-valued function
with $\supp{\rho}\subset [1/2, 2]$, $\xi, \eta\in\mathbb{R}$,
$\eta\neq 0$, $A>1$ a constant, and
\begin{equation*}
I(\lambda,\epsilon, \xi, \eta)=1_{[-A,
A]}(\xi/\eta)\int_{0}^{\infty} \! \rho(t)e^{i \lambda
\phi_{\epsilon}(t, \xi, \eta)}\,\mathrm{d}t, \quad \lambda>1,
\end{equation*}
where
\begin{equation*}
\phi_{\epsilon}(t, \xi, \eta)=Q_{\epsilon}(t)+(\xi/\eta)t.
\end{equation*}

\begin{lemma}\label{lemma3.1}
Assume that $Q_{\epsilon}\in C^N([1/4, 4])$ ($N\geq 5$) is a
real-valued function such that $|D^j Q_{\epsilon}|\leq C_1$ for
$0\leq j\leq N$ and $|D^2 Q_{\epsilon}|\geq c_1$ for constants $C_1$
and $c_1$. If $\chi\in C_0^{\infty}(\mathbb{R})$ has its support
contained in an interval of length $c_1/12$, then either one of the
following two statements holds.

(1) We have
\begin{equation}
\chi(-\xi/\eta)I(\lambda,\epsilon, \xi, \eta)=O(\lambda^{-(N-1)}).
\label{lemma2.1-1}
\end{equation}

(2) For each pair $(\xi, \eta)$  with $-\xi/\eta\in \supp \chi$,
there exists a unique $t=t(\xi, \eta)\in [1/3, 13/6]$ such that
$t(\xi, \eta)=(Q_{\epsilon}')^{-1}(-\xi/\eta)$ is $(N-1)$-times
differentiable and satisfies
\begin{equation}
D_{t}^1 \phi_{\epsilon}(t(\xi, \eta), \xi, \eta)=0
\label{critical-point-equ0}
\end{equation}
and
\begin{equation}
\begin{split}
&\chi(-\xi/\eta)I(\lambda,\epsilon, \xi,
\eta)=C\chi(-\xi/\eta)1_{[-A,
A]}(\xi/\eta)\rho(t(\xi, \eta))\cdot\\
  &\qquad\quad |D^2_{t}\phi_{\epsilon}(t(\xi, \eta), \xi,
\eta)|^{-1/2}e^{i \lambda \phi_{\epsilon}(t(\xi, \eta), \xi,
\eta)}\lambda^{-1/2}+O(\lambda^{-3/2})
\end{split}\label{lemma2.1-2}
\end{equation}
with $C$ being an absolute constant.

Furthermore, the implicit constants in \eqref{lemma2.1-1} and
\eqref{lemma2.1-2} are independent of $\lambda$, $\epsilon$, $\xi$,
and $\eta$.
\end{lemma}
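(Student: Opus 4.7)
The plan is to run the standard non-stationary/stationary-phase dichotomy, with the location of $\supp \chi$ relative to the critical-value set determining which regime applies. First I would compute $\partial_t \phi_\epsilon(t,\xi,\eta) = D^1 Q_\epsilon(t) + \xi/\eta$ and $\partial_t^k \phi_\epsilon = D^k Q_\epsilon$ for $k \geq 2$. Since $|D^2 Q_\epsilon| \geq c_1$ on $[1/4, 4]$, the map $D^1 Q_\epsilon$ is a strictly monotone $C^{N-1}$ diffeomorphism onto its image, so setting $t(\xi,\eta) := (D^1 Q_\epsilon)^{-1}(-\xi/\eta)$ produces a $C^{N-1}$ function (via the inverse function theorem) solving \eqref{critical-point-equ0} whenever it is defined, and the uniform bounds $|D^j Q_\epsilon| \leq C_1$, $|D^2 Q_\epsilon| \geq c_1$ give uniform-in-$\epsilon$ control of all of its derivatives.

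Next, let $B := D^1 Q_\epsilon([1/2, 2]) = D^1 Q_\epsilon(\supp \rho)$. By the mean value theorem together with $|D^2 Q_\epsilon| \geq c_1$, the interval $D^1 Q_\epsilon([1/3, 13/6])$ contains the $c_1/6$-neighborhood of $B$ (the spatial buffer is $1/6$ on each side of $\supp \rho$). I split on whether $\supp \chi$ meets the $c_1/12$-neighborhood of $B$.

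\emph{Case (1), non-stationary.} If $\supp \chi$ is disjoint from the $c_1/12$-neighborhood of $B$, then for all $t \in \supp \rho$ and $-\xi/\eta \in \supp \chi$, $|\partial_t \phi_\epsilon(t, \xi, \eta)| \geq c_1/12$. Integrating by parts $N-1$ times via the operator $Lu = \partial_t\bigl(u/(i\lambda\,\partial_t \phi_\epsilon)\bigr)$, using $|\partial_t^k \phi_\epsilon| \leq C_1$ for $k \geq 2$ and the $C^{N-1}$ bounds on $\rho$ to estimate the resulting integrand, yields \eqref{lemma2.1-1} with constant depending only on $C_1$, $c_1$, $A$, and $\|\rho\|_{C^{N-1}}$.

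\emph{Case (2), stationary.} If $\supp \chi$ meets the $c_1/12$-neighborhood of $B$, then since $|\supp \chi| \leq c_1/12$ we get $\supp \chi \subset $ the $c_1/6$-neighborhood of $B$, hence $\supp \chi \subset D^1 Q_\epsilon([1/3, 13/6])$. Consequently, for each $-\xi/\eta \in \supp \chi$ the critical point $t(\xi,\eta) \in [1/3, 13/6]$ exists and is unique. I then invoke a quantitative one-dimensional stationary phase theorem (e.g.\ Stein, \emph{Harmonic Analysis}, Ch.~VIII §1.2, Proposition 3) with phase $\phi_\epsilon$ satisfying $|\partial_t^2 \phi_\epsilon| \geq c_1$ and $|\partial_t^k \phi_\epsilon| \leq C_1$ for $2 \leq k \leq N$, producing \eqref{lemma2.1-2} with $O(\lambda^{-3/2})$ error uniform in $(\epsilon,\xi,\eta)$. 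When $t(\xi,\eta) \in [1/3, 13/6] \setminus [1/2, 2]$ the leading term vanishes through $\rho(t(\xi,\eta)) = 0$, consistent with the stated expansion.

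The main obstacle is not the dichotomy itself but keeping every constant strictly uniform in $\epsilon, \xi, \eta$; this requires invoking a version of stationary phase whose error is expressed in terms of $C^N$-bounds on the phase, $C^{N-1}$-bounds on the amplitude, and the lower bound on $|\partial_t^2 \phi_\epsilon|$, rather than an asymptotic statement. Everything else — the inverse function theorem applied to $D^1 Q_\epsilon$, the buffer computation that lets $|\supp\chi| = c_1/12$ outpace the margin $c_1/6$, and the $N-1$ iterated integrations by parts — follows routinely from the hypotheses $|D^j Q_\epsilon| \leq C_1$ and $|D^2 Q_\epsilon| \geq c_1$.
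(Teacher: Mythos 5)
Your proposal is correct and follows essentially the same strategy as the paper: the same $c_1/12$-threshold dichotomy (your ``$\supp\chi$ disjoint from the $c_1/12$-neighborhood of $B$'' is exactly the paper's ``$|D^1_t\phi_\epsilon(a,\xi_0,\eta_0)|\geq c_1/12$ for all $a\in[1/2,2]$ and $-\xi_0/\eta_0\in\supp\chi$''), the same buffer computation placing $t(\xi,\eta)\in[1/3,13/6]$, and the same two-regime treatment with integration by parts versus quantitative stationary phase. The only cosmetic difference is that the paper makes the stationary-phase step explicit by splitting the integral into $B(t(\xi,\eta),1/12)$ and its complement, integrating by parts on the latter, whereas you defer this to a citation of a quantitative stationary phase theorem — which, to handle the case $t(\xi,\eta)\in[1/3,13/6]\setminus\supp\rho$ uniformly, implicitly requires exactly such a decomposition, so you should not treat that citation as a black box.
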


\begin{proof}
Due to \eqref{origin-condition2}, we observe that $Q_{\epsilon}'$ is
monotone on $[1/4, 4]$ and that, for any $t\in [1/3, 13/6]$ and
$r\in (0, 1/12]$, $Q_{\epsilon}'$ is a bijection from $B(t, r)$
(\footnote{$B(t, r)$ denotes the interval $(t-r, t+r)$.}) to an
interval which contains $B(Q_{\epsilon}'(t), c_1 r)$.

Assume that there exist $a\in [1/2, 2]$ and $(\xi_0, \eta_0)$ with
$-\xi_0/\eta_0\in \supp \chi$ such that $|D_{t}^1 \phi_{\epsilon}(a,
\xi_0, \eta_0)|<c_1/12$, otherwise we get \eqref{lemma2.1-1} by
integration by parts.

Since $D_{t}^1 \phi_{\epsilon}(t, \xi,
\eta)=Q_{\epsilon}'(t)+\xi/\eta$,  we have that $-\xi_0/\eta_0\in
B(Q_{\epsilon}'(a), c_1/12)$. It follows from the observation above
that there exists a unique $a_0\in [1/4, 4]$ such that $a_0\in B(a,
1/12)$ and $Q_{\epsilon}'(a_0)=-\xi_0/\eta_0$. Thus $\supp
\chi\subset B(Q_{\epsilon}'(a_0), c_1/12)$. The observation above
then implies that, for each pair $(\xi, \eta)$ with $-\xi/\eta\in
\supp \chi$, there exists a unique $t(\xi, \eta)\in B(a_0, 1/12)$
such that $Q_{\epsilon}'(t(\xi, \eta))=-\xi/\eta$, which is
\eqref{critical-point-equ0}. In particular, $t(\xi,
\eta)=(Q_{\epsilon}')^{-1}(-\xi/\eta)$, whose differentiability is a
consequence of the inverse function theorem.

Note that $B(t(\xi, \eta), 1/12)\subset [1/4, 9/4]$ and we also have
\begin{equation*}
|D^1_{t} \phi_{\epsilon}(t, \xi, \eta)|=|D^1_{t} \phi_{\epsilon}(t,
\xi, \eta)-D^1_{t} \phi_{\epsilon}(t(\xi, \eta), \xi, \eta)|\geq
c_1|t-t(\xi, \eta)|.
\end{equation*}
Applying to $I(\lambda,\epsilon, \xi, \eta)$ the method of
stationary phase on $B(t(\xi, \eta), 1/12)$ and integration by parts
outside $B(t(\xi, \eta), 1/24)$ yields \eqref{lemma2.1-2}.
\end{proof}

\begin{remark}
A similar argument in high dimensions can be found in the proof of
\cite[Proposition 2.4]{guo}. For the method of stationary phase, the
reader can check \cite[Section 7.7]{hormander}.
\end{remark}

We quote below Li's \cite[Theorem 6.2]{Li2013} with a small
modification in the statement for the sake of our later application,
however its proof remains the same. Let $\sigma\in (0, 1]$,
$\textbf{I}\subset\mathbb{R}$ be a fixed bounded interval, and
$U(\textbf{I})$  a nontrivial subset of $L^2(\textbf{I})$ such that
the $L^2$-norm of every element of $U(\textbf{I})$ is uniformly
bounded by a constant. We say that a function $f\in L^2(\textbf{I})$
is \emph{$\sigma$-uniform in $U(\textbf{I})$} if
\begin{equation*}
\left|\int_{\textbf{I}} \! f(x) \overline{u(x)} \,\textrm{d}x
\right|\leq \sigma \|f\|_{L^2(\textbf{I})} \quad \textrm{for all
$u\in U(\textbf{I})$.}
\end{equation*}

\begin{lemma}\label{Li-Thm6.2}
Let $\mathscr{L}$ be a bounded sublinear functional from
$L^2(\textbf{I})$ to $\mathbb{C}$, $S_\sigma$ the set of all
functions that are $\sigma$-uniform in $U(\textbf{I})$,
\begin{equation*}
A_\sigma = \sup\{|\mathscr{L}(f)|/\|f\|_{L^2(\textbf{I})} : f\in
S_\sigma, f\neq 0\},
\end{equation*}
and
\begin{equation*}
M=\sup_{u\in U(\textbf{I})}|\mathscr{L}(u)|.
\end{equation*}
Then
\begin{equation*}
\|\mathscr{L}\|\leq \max\{A_\sigma,  2\sigma^{-1} M\}.
\end{equation*}
\end{lemma}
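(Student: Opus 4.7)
My plan is a dichotomy argument combined with an $L^2$-projection onto an element of $U(\mathbf{I})$, producing a self-referential inequality for $\|\mathscr{L}\|$ that one then solves. Fix $f\in L^2(\mathbf{I})$ with $f\neq 0$. By the very definition of $S_\sigma$, either $f\in S_\sigma$, in which case $|\mathscr{L}(f)|\le A_\sigma\|f\|_{L^2(\mathbf{I})}$ directly, or there exists $u\in U(\mathbf{I})$ with $|\langle f,u\rangle|>\sigma\|f\|_{L^2(\mathbf{I})}$. In the latter case I orthogonally decompose $f=\alpha u + g$ with $\alpha=\langle f,u\rangle/\|u\|_{L^2(\mathbf{I})}^2$ and $g\perp u$.

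The key quantitative consequences are: (i) Cauchy--Schwarz applied to $|\langle f,u\rangle|>\sigma\|f\|_{L^2(\mathbf{I})}$ forces $\|u\|_{L^2(\mathbf{I})}>\sigma$, hence $|\alpha|\le\|f\|_{L^2(\mathbf{I})}/\|u\|_{L^2(\mathbf{I})}<\sigma^{-1}\|f\|_{L^2(\mathbf{I})}$; and (ii) Pythagoras yields
\begin{equation*}
\|g\|_{L^2(\mathbf{I})}^2 = \|f\|_{L^2(\mathbf{I})}^2 - |\langle f,u\rangle|^2/\|u\|_{L^2(\mathbf{I})}^2 \le (1-\sigma^2/B^2)\|f\|_{L^2(\mathbf{I})}^2,
\end{equation*}
where $B$ is the uniform $L^2$-bound on $U(\mathbf{I})$. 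Then sublinearity of $\mathscr{L}$ gives
\begin{equation*}
|\mathscr{L}(f)|\le|\alpha|\,|\mathscr{L}(u)|+|\mathscr{L}(g)|\le\sigma^{-1}M\|f\|_{L^2(\mathbf{I})}+\|\mathscr{L}\|(1-\sigma^2/B^2)^{1/2}\|f\|_{L^2(\mathbf{I})}.
\end{equation*}
Taking supremum over $f\in L^2(\mathbf{I})\setminus\{0\}$ produces the self-referential inequality
\begin{equation*}
\|\mathscr{L}\|\le\max\Bigl\{A_\sigma,\;\sigma^{-1}M+\|\mathscr{L}\|(1-\sigma^2/B^2)^{1/2}\Bigr\},
\end{equation*}
and rearranging (using $1-(1-x)^{1/2}\ge x/2$ for $0\le x\le 1$) absorbs the factor depending on $B$ into the normalization of $U(\mathbf{I})$ and yields the stated bound $\|\mathscr{L}\|\le\max\{A_\sigma,\,2\sigma^{-1}M\}$.

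The main obstacle is the self-referential character of the non-uniform branch: the residual $|\mathscr{L}(g)|$ has to be bounded back in terms of $\|\mathscr{L}\|$ itself, so one has to know a priori that $\mathscr{L}$ is bounded on $L^2(\mathbf{I})$ (which is part of the hypothesis) before the inequality can be inverted. Equivalently, one may iterate the orthogonal extraction and produce a sequence $f=\sum_{i=1}^{k}\alpha_i u_i + g_k$ in which $g_k$ either becomes $\sigma$-uniform at some stage or has $\|g_k\|_{L^2(\mathbf{I})}$ decaying geometrically, so that $|\mathscr{L}(g_k)|\le\|\mathscr{L}\|\cdot\|g_k\|_{L^2(\mathbf{I})}\to 0$ and the geometric series $\sum|\alpha_i|$ can be summed directly; this avoids the self-reference and leads to the same bound.
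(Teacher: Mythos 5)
Your overall strategy --- orthogonally decompose $f=\alpha u+g$, bound $|\alpha|\,|\mathscr{L}(u)|$ by $M$ and $|\mathscr{L}(g)|$ by $\|\mathscr{L}\|\,\|g\|$, and close a self-referential inequality --- is the standard route for this lemma (the paper defers to the proof of Li's Theorem~6.2, which proceeds in the same way). However, the way you apply the two bounds destroys the cancellation that produces the constant $2\sigma^{-1}$, and what you actually obtain is wrong. Normalize $\|f\|_{L^2(\textbf{I})}=1$ and set $s:=|\langle f,u\rangle|>\sigma$. Plugging your one-sided bounds $|\alpha|<\sigma^{-1}$ and $\|g\|\le(1-\sigma^2/B^2)^{1/2}$ into the self-referential inequality yields $\|\mathscr{L}\|\bigl(1-(1-\sigma^2/B^2)^{1/2}\bigr)\le\sigma^{-1}M$; with $1-(1-x)^{1/2}\ge x/2$ this gives $\|\mathscr{L}\|\le 2B^{2}\sigma^{-3}M$, not $2\sigma^{-1}M$. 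The assertion that the $B$-dependence ``absorbs into the normalization'' does not rescue this: even after setting $B=1$ you are still off by a factor $\sigma^{-2}$, which is not cosmetic.

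The repair is to carry the exact values $|\alpha|=s/\|u\|^2$ and $\|g\|^2=1-s^2/\|u\|^2$ through the computation rather than replacing them by separate worst-case estimates. The self-referential inequality then reads
\begin{equation*}
\|\mathscr{L}\|\Bigl(1-\sqrt{1-s^2/\|u\|^2}\Bigr)\le \frac{s}{\|u\|^2}\,M,
\end{equation*}
and rationalizing (multiply by $1+\sqrt{1-s^2/\|u\|^2}$ and divide by $s^2/\|u\|^2$) makes the $\|u\|$-dependence cancel identically:
\begin{equation*}
\|\mathscr{L}\|\le \frac{1+\sqrt{1-s^2/\|u\|^2}}{s}\,M\le \frac{2M}{s}<\frac{2M}{\sigma}.
\end{equation*}
The uniform bound $\|u\|\le B$ is still needed, but only to bound $1-(1-s^2/\|u\|^2)^{1/2}$ away from $0$ uniformly in $f$, so that the inequality for near-extremal $f$ passes to the supremum $\|\mathscr{L}\|$; it must not be used to estimate $|\alpha|$ or $\|g\|$ themselves. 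Your fallback via iterated extraction suffers from the same loss: unless the cancellation above is preserved term by term, the geometric series $\sum_i|\alpha_i|$ evaluates to size $B^{2}\sigma^{-3}$ rather than $\sigma^{-1}$.
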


We also need the following theorem to handle the minor part in
Section \ref{near-origin}. This theorem is a variant of the results
in \cite[Theorem 2.1]{li2008U} concerning estimates for certain
paraproducts. The only change is that the standard dyadic sequence
$\{2^{\alpha j}\}_{j\in \mathbb Z}$ with $\alpha\in \mathbb
N\backslash \{0\}$ (in \cite{li2008U}) is replaced by a dyadic-like
sequence $\{\Delta_j\}$ here, while the proof remains the same; see
\cite[Section 3 and 4]{li2008U}.

\begin{theorem}\label{para}
Let $L\in \mathbb Z$ and let $\{\Delta_j\}_{j>L}$ be a sequence of
positive numbers which is dyadic-like, i.e. there is a $K\in\mathbb
Z$ s.t. for all $j>L$ and $j+K>L$ the following holds
\begin{align}\label{dl}
{\Delta_{j+K} } \geq 2  {\Delta_j}.
\end{align}
Let $\Phi_1$ be a Schwartz function on $\mathbb{R}$ whose Fourier
transform is a standard bump function supported on
$[-2,-1/2]\cup[1/2,2]$, and $\Phi_2$ be a Schwartz function on
$\mathbb{R}$  such that $\widehat{\Phi_2}$ is a standard bump
function supported on $[-1,1]$ and $\widehat{\Phi_2}(0)=1$. For
$(n_1,n_2)\in \mathbb Z^2$ and $l =1$ or $2$, set
\begin{equation*}
\mathcal M_{l,n_1,n_2}(\xi,\eta) = \sum_{j>L}\widehat{\Phi_l}(\frac
\xi {2^j})  e^{  2\pi i   n_1  \frac \xi   {2^j}   }
\widehat{\Phi_{3-l}}(\frac \eta {\Delta_j} ) e^{  2\pi i   n_2 \frac
\eta {\Delta_j}   }.
\end{equation*}
Then for $l=1$ and $2$, for any $p_1$, $p_2>1$, with $1/r =
1/p_1+1/p_2$, there is a constant $C$ independent of $(n_1,n_2)$
such that for all $f_1\in L^{p_1}(\mathbb R)$, $f_2\in
L^{p_2}(\mathbb R)$, the following holds
\begin{equation*}
\left\|
\Pi_{l,n_1,n_2} (f_1,f_2) \right\|_r \leq
C(1+n_1^2)^{10}(1+n_2^2)^{10} \|f_1\|_{p_1}\|f_2\|_{p_2},
\end{equation*}
where
\begin{equation*}
\Pi_{l,n_1,n_2}(f_1,f_2)(x) =\iint\!
\mathcal{M}_{l,n_1,n_2}(\xi,\eta)
\widehat{f_1}(\xi)\widehat{f_2}(\eta) e^{2\pi i (\xi+\eta)x }
\,\mathrm{d}\xi \,\mathrm{d}\eta.
\end{equation*}
\end{theorem}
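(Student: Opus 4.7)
The plan is to follow Li's paraproduct argument from \cite[Sections 3 and 4]{li2008U} essentially verbatim, substituting the dyadic-like scales $\{\Delta_j\}$ for the geometric scales $\{2^{\alpha j}\}$ used there. For fixed $l$ and $(n_1,n_2)$, the multiplier $\mathcal M_{l,n_1,n_2}$ has the structure of a classical paraproduct: the $\Phi_1$-factor (supported in a frequency annulus) is a translated Littlewood--Paley projection, and the $\Phi_2$-factor (with $\widehat{\Phi_2}(0)=1$) is a translated smooth approximation of the identity, each evaluated at the appropriate scale.

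The first step is to reduce the general sequence $\{\Delta_j\}$ to a genuinely lacunary one. Using \eqref{dl}, I split $\sum_{j>L}$ into the $K$ arithmetic progressions $\{j>L:\ j\equiv s\pmod{K}\}$, $s=0,1,\ldots,K-1$. Within each progression, the subsequence $(\Delta_j)_{j\equiv s}$ satisfies $\Delta_{j+K}\geq 2\Delta_j$ and hence is lacunary with ratio at least $2$. It suffices to bound each of the $K$ subseries uniformly in $s$; the final constant absorbs a factor of $K$.

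On a single lacunary subseries the operator takes the standard paraproduct form
\begin{equation*}
\sum_{j}\bigl(f_1*\Psi^{(l)}_{j,n_1}\bigr)(x)\,\bigl(f_2*\Psi^{(3-l)}_{j,n_2}\bigr)(x),
\end{equation*}
with $\Psi^{(l)}_{j,n_1}$ a (translated) Littlewood--Paley-type bump at scale $2^j$ and $\Psi^{(3-l)}_{j,n_2}$ a (translated) smooth approximation of the identity at scale $\Delta_j$ (or the reverse roles for $l=2$). The argument of \cite{li2008U} then applies: apply H\"older's inequality in $x$, control the Littlewood--Paley factor by the standard square function (bounded on $L^{p}$ for $1<p<\infty$), and dominate the approximation-of-the-identity factor pointwise by the Hardy--Littlewood maximal function. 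Neither step uses any relation between the two scale families $\{2^j\}$ and $\{\Delta_j\}$ beyond the lacunarity of each, so the argument carries over unchanged.

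The polynomial factors in $n_1,n_2$ arise from the phase shifts: the multiplier $e^{2\pi i n_1\xi/2^j}\widehat{\Phi_1}(\xi/2^j)$ corresponds in physical space to the scale-$2^j$ dilate of $\Phi_1$ translated by $-n_1/2^j$, and analogously for $\Phi_2$ and $\Delta_j$. Schwartz decay of $\Phi_1,\Phi_2$ yields pointwise bounds on the translated bumps by Schwartz envelopes whose relevant $L^1$-like norms grow by at most $(1+n_1^2)^{10}(1+n_2^2)^{10}$, and these quantitative bounds propagate through the square-function and maximal-function estimates without modification. The main obstacle, and indeed the only one, is purely bookkeeping: checking that every occurrence of $2^{\alpha j}$ in \cite{li2008U} can legitimately be replaced by $\Delta_j$, which is immediate once the sum has been split into lacunary subseries as above.
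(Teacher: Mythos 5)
Your overall strategy --- reduce to Li's paraproduct result \cite[Theorem 2.1]{li2008U} and argue that the proof in \cite[Sections 3--4]{li2008U} uses only the lacunarity of the scale sequence, not the specific geometric form $2^{\alpha j}$ --- is the same as the paper's, which says exactly this and essentially nothing more. The splitting into $|K|$ residue classes to extract genuinely lacunary subsequences from \eqref{dl} is a reasonable normalization (note that $K$ in \eqref{dl} may be negative, so one splits into $|K|$ progressions, not $K$), and the way you attribute the polynomial loss in $(n_1,n_2)$ to the translated Schwartz envelopes is fine.

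However, the one-paragraph summary you offer of the underlying paraproduct estimate does not hold up and, taken literally, would not yield the theorem. If you apply Cauchy--Schwarz in $j$ first, the low-pass factor appears inside an $\ell^2$-in-$j$ square function $\bigl(\sum_j |f_2*\Psi_j^{(3-l)}(x)|^2\bigr)^{1/2}$, and this is \emph{not} controlled by $Mf_2(x)$: for smooth compactly supported $f_2$, the individual convolutions $f_2*\Psi_j^{(3-l)}(x)$ approach $f_2(x)$ and do not decay in $j$, so the pointwise $\ell^2$-sum diverges. The standard repair is to instead bound the $L^r$-norm of the full sum $\sum_j P_j f_1\cdot S_j f_2$ by the square function of the \emph{outputs} (a reverse Littlewood--Paley bound, valid for all $r>0$) and only then pull out the maximal function pointwise; but that reverse inequality requires the Fourier supports of the summands $P_j f_1\cdot S_j f_2$ to have bounded overlap, and whether that holds depends on the relative sizes of $2^j$ and $\Delta_j$, which Theorem~\ref{para} does not constrain. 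This is precisely the point at which one cannot get away with the three-line Coifman--Meyer argument and must actually go into Li's Sections 3--4 (discretization, tree and Carleson-measure estimates). Your assertion that ``neither step uses any relation between the two scale families beyond the lacunarity of each'' is therefore the crux of the whole proof, and you assert it rather than verify it against Li's actual argument. The paper does the same, but it is at least explicit that it is citing rather than re-proving; your sketch gives the impression of a self-contained elementary proof that is not, in fact, available.
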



%
%
%
%

\section{The main estimates}\label{main-estimates}

Let $\widehat{\varphi}\in C_{0}^{\infty}(\mathbb{R})$ such that
$\widehat{\varphi}=1$ on $\{t\in\mathbb{R} : 3/8\leq|t|\leq 17/8\}$
and $\supp \widehat{\varphi}\subset \{t\in\mathbb{R} :
1/4\leq|t|\leq 9/4\}$. For $j, m\in\mathbb{N}$ denote
$\epsilon_j=2^{-j}$ and
\begin{equation*}
K_{j, m}(\xi, \eta)=\int_{0}^{\infty} \! \rho(t)e^{-2\pi i
2^m\eta\phi_{\epsilon_j}(t, \xi, \eta)}\,\mathrm{d}t,
\end{equation*}
where $\rho$ and $\phi_{\epsilon_j}$ are as defined at the beginning
of Section \ref{phase-function}.

For $f, g\in L^2(\mathbb{R})$ denote, when
$|\gamma'(\epsilon_j)|\leq K_1|\epsilon_j|^{c_1}$,
\begin{equation*}
B_{j, m}^{\varphi}(f, g)(x)=|\gamma'(\epsilon_j)|^{1/2}\iint \!
\widehat{f}(\xi)\widehat{\varphi}(\xi)
\widehat{g}(\eta)\widehat{\varphi}(\eta)  e^{2\pi i
(\gamma'(\epsilon_j)\xi+\eta)x}K_{j, m}(\xi, \eta) \,\mathrm{d}\xi
\,\mathrm{d}\eta,
\end{equation*}
and, when $|\gamma'(\epsilon_j)|\geq K_2|\epsilon_j|^{-c_1}$,
\begin{equation*}
B_{j, m}^{\varphi}(f, g)(x)=|\gamma'(\epsilon_j)|^{-1/2}\iint \!
\widehat{f}(\xi)\widehat{\varphi}(\xi)
\widehat{g}(\eta)\widehat{\varphi}(\eta)  e^{2\pi i
(\xi+\gamma'(\epsilon_j)^{-1}\eta)x}K_{j, m}(\xi, \eta)
\,\mathrm{d}\xi \,\mathrm{d}\eta.
\end{equation*}

\begin{proposition}\label{prop4-1} Assume that
$\Gamma(t)=(t, \gamma(t))\in \textbf{F}(-1, 1)$ (\footnote{We
actually do not need the condition \eqref{mt} for this
proposition.}). For  any $\beta<1$, there exist an $L\in\mathbb{N}$
and a constant $C_{\beta}$  such that whenever $j\geq L$,
$m\in\mathbb{N}$, $n\in\mathbb{Z}$, and $f, g\in L^2(\mathbb{R})$,
we have
\begin{enumerate}
\item if $|\gamma'(\epsilon_j)|\leq K_1|\epsilon_j|^{c_1}$, then
\begin{equation*}
\|B_{j, m}^{\varphi}(f, g)\textbf{1}_{2^m\gamma'(\epsilon_j)^{-1},
n}\|_{1}\leq C_{\beta} C_{j, m}\|f\|_{2} \|g\|_{2},
\end{equation*}
where
\begin{equation}
C_{j, m}=\left\{
\begin{array}{ll}
2^{-m/16} & \textrm{if $|\gamma'(\epsilon_j)|> 2^{-m}$},\\
2^{-\beta m/4}                    & \textrm{if
$|\gamma'(\epsilon_j)|\leq 2^{-m}$};
\end{array}\right. \label{prop4-1-1}
\end{equation}
\item if $|\gamma'(\epsilon_j)|\geq K_2|\epsilon_j|^{-c_1}$, then
\begin{equation*}
\|B_{j, m}^{\varphi}(f, g)\textbf{1}_{2^m\gamma'(\epsilon_j),
n}\|_{1}\leq C_{\beta} C_{j, m}'\|f\|_{2} \|g\|_{2},
\end{equation*}
where
\begin{equation}
C_{j, m}'=\left\{
\begin{array}{ll}
2^{-m/16} & \textrm{if $|\gamma'(\epsilon_j)|<2^{m}$},\\
2^{-\beta m/4}                    & \textrm{if
$|\gamma'(\epsilon_j)|\geq 2^{m}$}.
\end{array}\right. \label{prop4-1-2}
\end{equation}
\end{enumerate}
\end{proposition}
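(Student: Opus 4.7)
The plan is to reduce the $L^1$ bound on the interval $I_n=2^m\gamma'(\epsilon_j)^{-1}\cdot[n,n+1)$ to two distinct $L^2$ bounds on $B_{j,m}^{\varphi}(f,g)\mathbf{1}_{I_n}$, obtained via the $TT^*$ method applied on the frequency and on the time sides respectively, and then to balance them. Specifically, Cauchy--Schwarz gives
\[
\|B_{j,m}^{\varphi}(f,g)\mathbf{1}_{I_n}\|_1\le |I_n|^{1/2}\,\|B_{j,m}^{\varphi}(f,g)\mathbf{1}_{I_n}\|_2,
\]
so the task becomes to save the factor $C_{j,m}|I_n|^{-1/2}\|f\|_2\|g\|_2$ in $L^2$.

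For the frequency-side $TT^*$, I would expand $\|B_{j,m}^{\varphi}\mathbf{1}_{I_n}\|_2^2$ by Plancherel. The $x$-integration against $\mathbf{1}_{I_n}$ produces a factor $\widehat{\mathbf{1}_{I_n}}\bigl(\gamma'(\epsilon_j)(\xi_1-\xi_2)+\eta_1-\eta_2\bigr)$, and the remaining kernel is $K_{j,m}(\xi_1,\eta_1)\overline{K_{j,m}(\xi_2,\eta_2)}$. When $|\gamma'(\epsilon_j)|>2^{-m}$, the large parameter $\lambda\sim 2^m$ (since $|\eta|\sim 1$ on $\supp\widehat{\varphi}$) lets Lemma~\ref{lemma3.1} replace each $K_{j,m}$ by its stationary-phase asymptotic: amplitude $\sim 2^{-m/2}$ times an oscillatory factor with phase $2^m\eta\,\phi_{\epsilon_j}(t(\xi,\eta),\xi,\eta)$ at the critical point $t(\xi,\eta)=(Q'_{\epsilon_j})^{-1}(-\xi/\eta)$. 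Estimating the resulting quadratic form by Schur's test uses the non-degeneracy furnished by \eqref{origin-condition3} or \eqref{origin-condition3-prime}: differentiating the critical-point relation, the discriminant $(D^2Q_{\epsilon})^2-D^1Q_{\epsilon}D^3Q_{\epsilon}$ appears as the determinant controlling $(\xi,\eta)\mapsto \nabla_{(\xi,\eta)}\bigl[\eta\,\phi_{\epsilon_j}(t(\xi,\eta),\xi,\eta)\bigr]$, so one further integration by parts in the $(\xi_i,\eta_i)$ variables produces the needed decay.

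For the time-side $TT^*$, I would exploit the convolution representation
\[
B_{j,m}^{\varphi}(f,g)(x)=|\gamma'(\epsilon_j)|^{1/2}\!\int\!\rho(t)\,\tilde f\bigl(\gamma'(\epsilon_j)x-2^m t\bigr)\,\tilde g\bigl(x-2^m Q_{\epsilon_j}(t)\bigr)\,\mathrm{d}t,
\]
with $\tilde f=f*\varphi$, $\tilde g=g*\varphi$, obtained by carrying out the Fourier inversions in $B_{j,m}^{\varphi}$. Squaring and changing variables $(t_1,t_2)\mapsto(t_1,s)$ with $s=t_1-t_2$, the $TT^*$ phase becomes essentially $2^m\bigl(Q_{\epsilon_j}(t_1)-Q_{\epsilon_j}(t_2)\bigr)\eta$ after a partial Fourier analysis in one of the spatial variables, so van der Corput using the second-derivative lower bound \eqref{origin-condition2} produces a second $L^2$ estimate in which the truncation to $I_n$ and the spectral supports of $\widehat f\widehat\varphi$, $\widehat g\widehat\varphi$ enter naturally.

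Interpolating between these two $L^2$ estimates and plugging back into the Cauchy--Schwarz reduction yields the $2^{-m/16}$ bound of case (1): roughly, the frequency estimate decays like $2^{-m/2}$ but loses a power of $|I_n|$, the time estimate loses only a bounded factor but decays more slowly, and the geometric mean produces the exponent $1/16$. In case (2), where $|\gamma'(\epsilon_j)|\le 2^{-m}$, the effective phase parameter in $K_{j,m}$ is no longer uniformly large, so Lemma~\ref{lemma3.1} is replaced by direct integration by parts against $Q''_{\epsilon_j}$; the adjustable parameter $\beta<1$ controls how many integrations by parts one performs versus the cost of Schur constants, yielding $2^{-\beta m/4}$. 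Part (2) of the proposition follows from part (1) by interchanging the roles of $f$ and $g$, which is precisely the symmetry encoded in the two equivalent definitions of $B_{j,m}^{\varphi}$. The main obstacle will be verifying uniformly in $j$ and $\epsilon_j$ that the Hessian of the combined $TT^*$ stationary-phase is non-degenerate, which is exactly the content of the discriminant conditions \eqref{origin-condition3} and \eqref{origin-condition3-prime}; once that is in hand, the remaining work is bookkeeping of constants through Lemma~\ref{lemma3.1} and the two $TT^*$ expansions.
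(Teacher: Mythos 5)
Your frequency-side argument tracks the paper's Part~1 reasonably well (the paper pairs against $h\in L^2$, applies Lemma~\ref{lemma3.1} to replace $K_{j,m}$ by its stationary-phase main term, changes variables to isolate a bilinear form $T_{j,m}$, and then proves $\big|\partial^2_{y,x}P_\tau\big|\asymp|\tau|$ using \eqref{origin-condition3} and applies H\"ormander's $L^2$ oscillatory integral theorem rather than Schur's test --- a cosmetic difference). The balancing of the two estimates to produce $2^{-m/16}$ when $|\gamma'(\epsilon_j)|\geq 2^{-\beta m}$ is also as in the paper, and reducing part~(2) to part~(1) by the $f\leftrightarrow g$ symmetry is correct.

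The genuine gap is in your ``time-side $TT^*$.'' As written, you want to square out $\|B_{j,m}^\varphi(f,g)\mathbf{1}_{I_n}\|_2^2$ and extract a phase $2^m(Q_{\epsilon_j}(t_1)-Q_{\epsilon_j}(t_2))\eta$ by a partial Fourier analysis, then apply van der Corput using \eqref{origin-condition2}. But at that point \emph{both} $f$ and $g$ are arbitrary $L^2$ functions; there is no usable oscillation in the $t$-integral because the $t$-dependence sits inside $\tilde f(\gamma'(\epsilon_j)x-2^m t)$, which is opaque. A $TT^*$ unfolding requires pairing the squared variable (here $h$ or $x$) against something with explicit phase structure, and you have not supplied that structure. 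The paper resolves exactly this obstruction via Li's $\sigma$-uniformity device (Lemma~\ref{Li-Thm6.2}): in Step~1 one shows that $\sigma$-uniform $f$ already give the bound (cheaply), and in Step~2 one reduces to $\widehat f\big|_{\mathbf{I}}=u_{s,r,\eta}$, an explicit chirp, whose oscillation \emph{is} what makes the time-side $TT^*$ (in the $h$-variable, with the phase $P_2$ built from $Q_{\epsilon_j}$) bite. Without this reduction, your second estimate does not exist, and the interpolation step collapses to just the frequency-side bound, which is useless when $|\gamma'(\epsilon_j)|$ is close to $2^{-m}$. A secondary but real error: your explanation of where $\beta<1$ comes from (``how many integrations by parts one performs'') is wrong; in the paper $\beta$ enters because the pointwise bound $\min\{1,(2^m|x||\tau|)^{-1}\}$ is dominated by $(2^m|x||\tau|)^{-\beta}$ precisely to make the subsequent $\tau$-integral converge, which forces $\beta<1$, and the sub-case $|\gamma'(\epsilon_j)|\leq 2^{-m}$ is handled entirely by the $\sigma$-uniformity Part~2, not by weakened integration by parts on $K_{j,m}$.
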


The rest of this section is devoted to the proof of Proposition
\ref{prop4-1}.

We first observe that there is actually a symmetry between the case
$|\gamma'(\epsilon_j)|\leq K_1|\epsilon_j|^{c_1}$ and the case
$|\gamma'(\epsilon_j)|\geq K_2|\epsilon_j|^{-c_1}$, hence we will
only prove the former case while the other one can be handled
similarly. We can also simplify the domain of integration of $B_{j,
m}^{\varphi}(f, g)(x)$ by using a decomposition
$\widehat{\varphi}=\widehat{\varphi}1_{(0,
\infty)}+\widehat{\varphi}1_{(-\infty, 0]}$, which allows us to
restrict the domain to one of the cubes $(\pm [1/4, 9/4])\times(\pm
[1/4, 9/4])$. We will still use $\widehat{\varphi}$ below but with
its support contained in either $[1/4, 9/4]$ or $[-9/4, -1/4]$ (and
this won't cause any problem).

The proof is split into three parts. In the first part, we apply the
$TT^*$ method to estimate $\|B_{j, m}^{\varphi}(f, g)\|_{1}$, during
which procedure we need a standard result from the oscillatory
integral theory and a necessary condition
$|\gamma'(\epsilon_j)|>2^{-m}$. The bound we get (see
\eqref{4-1-estimate2} below) is efficient when
$|\gamma'(\epsilon_j)|$ is large but inefficient when
$|\gamma'(\epsilon_j)|$ is close to $2^{-m}$. In the second part,
with the help of Lemma \ref{Li-Thm6.2} (the method of
$\sigma$-uniformity introduced in Li~\cite{Li2013}), we can put
certain restrictions on the function $f$ (or $g$) and reduce the
estimate of $\|B_{j, m}^{\varphi}(f, g)\|_{1}$ to a restricted
version, to which the $TT^*$ method can be applied without extra
assumptions on the size of $|\gamma'(\epsilon_j)|$. The bound we get
in this part (see \eqref{estimate4-2-1} and \eqref{estimate4-2-3}
below) is efficient when $|\gamma'(\epsilon_j)|$ is small (even when
$|\gamma'(\epsilon_j)|$ is close to $2^{-m}$) but inefficient when
$|\gamma'(\epsilon_j)|$ is large (see also
Lie~\cite[P.~18]{Lie2011}). In the last part, we take advantage of
both results and prove the desired estimate.


\subsection{Part 1: $j\geq L$, $m\in \mathbb{N}$ such that $|\gamma'(\epsilon_j)|>2^{-m}$}\label{case1}

We first prove that, for $h\in L^2(\mathbb{R})$,
\begin{equation}
\left|\int \! B_{j, m}^{\varphi}(f, g)(x)h(x) \,
\mathrm{d}x\right|\leq
C(2^m|\gamma'(\epsilon_j)|)^{-1/6}\|f\|_2\|g\|_2\cdot(2^{-m}|\gamma'(\epsilon_j)|)^{1/2}\|h\|_2,\label{4-1-estimate1}
\end{equation}
which trivially leads to the estimate
\begin{equation}
\|B_{j, m}^{\varphi}(f, g)\textbf{1}_{2^m\gamma'(\epsilon_j)^{-1},
n}\|_1 \leq C(2^m|\gamma'(\epsilon_j)|)^{-1/6}\|f\|_2\|g\|_2.
\label{4-1-estimate2}
\end{equation}

We can find a finite open cover of the interval $[-10, 10]$ by using
open intervals of length $c_1/12$, associated to which we can
construct a partition of unity. By inserting this partition of unity
we reduce the estimate of $\int \! B_{j, m}^{\varphi}(f, g)(x)h(x)
\, \mathrm{d}x$ to
\begin{align*}
&\int \! \widetilde{B}_{j, m}^{\varphi}(f, g)(x)h(x) \, \mathrm{d}x=  \\
&\quad |\gamma'(\epsilon_j)|^{1/2} \iint \!
\widehat{f}\widehat{\varphi}(\xi) \widehat{g}\widehat{\varphi}(\eta)
\mathcal{F}^{-1}[h](\gamma'(\epsilon_j)\xi+\eta)
\chi(-\xi/\eta)K_{j, m}(\xi, \eta) \,\mathrm{d}\xi \,\mathrm{d}\eta,
\end{align*}
where $\chi$ is smooth and supported in an interval of length
$c_1/12$.

We can then apply Lemma \ref{lemma3.1} to $\chi(-\xi/\eta)K_{j,
m}(\xi, \eta)$. If \eqref{lemma2.1-1} holds, then an application of
H\"older's inequality yields
\begin{equation}
\left|\int \! \widetilde{B}_{j, m}^{\varphi}(f, g)(x)h(x) \,
\mathrm{d}x\right|\leq C
2^{-m/2}\|f\|_2\|g\|_2\cdot(2^{-m}|\gamma'(\epsilon_j)|)^{1/2}\|h\|_2.\label{estimate4-1}
\end{equation}
This estimate immediately leads to \eqref{4-1-estimate1}.

Below we will assume that the second statement in Lemma
\ref{lemma3.1} holds. Applying \eqref{lemma2.1-2} yields
\begin{align}
&\int \! \widetilde{B}_{j, m}^{\varphi}(f, g)(x)h(x) \, \mathrm{d}x=C(2^{-m}|\gamma'(\epsilon_j)|)^{1/2}\cdot  \nonumber \\
&\quad \iint \! \widehat{f}\widehat{\varphi}(\xi)
\widehat{g}\widehat{\varphi}(\eta)
\mathcal{F}^{-1}[h](\gamma'(\epsilon_j)\xi+\eta)a(\xi, \eta)
e^{-2\pi i 2^m\eta\phi_{\epsilon_j}(t(\xi, \eta), \xi, \eta)}
\,\mathrm{d}\xi \,\mathrm{d}\eta, \label{4-1-double-int}
\end{align}
where we have omitted the error term in \eqref{lemma2.1-2} (since it
leads to the same bound as in \eqref{estimate4-1}), and $a(\xi,
\eta)$ is defined as
\begin{equation*}
a(\xi, \eta)=\chi(-\xi/\eta)\rho(t(\xi,
\eta))|\eta|^{-1/2}|D^2_{t}\phi_{\epsilon_j}(t(\xi, \eta), \xi,
\eta)|^{-1/2}.
\end{equation*}
Applying to the double integral in \eqref{4-1-double-int} a change
of variables $\gamma'(\epsilon_j)\xi+\eta\rightarrow \xi$,
$\eta/\gamma'(\epsilon_j)\rightarrow \eta$ and then H\"{o}lder's
inequality yields
\begin{equation}
\left|\int \! \widetilde{B}_{j, m}^{\varphi}(f, g)(x)h(x) \,
\mathrm{d}x\right|\leq C\|T_{j, m}(f,
g)\|_2\cdot(2^{-m}|\gamma'(\epsilon_j)|)^{1/2}\|h\|_2,\label{4-1-after-holder}
\end{equation}
where
\begin{align*}
T_{j, m}(f, g)(\xi)&=\int \!
\widehat{f}\widehat{\varphi}(\gamma'(\epsilon_j)^{-1}\xi-\eta)
\widehat{g}\widehat{\varphi}(\gamma'(\epsilon_j)\eta)
a(\gamma'(\epsilon_j)^{-1}\xi-\eta, \gamma'(\epsilon_j)\eta)\cdot \\
&\quad \quad e^{-2\pi i
2^m(\gamma'(\epsilon_j)\eta)\phi_{\epsilon_j}(t(\gamma'(\epsilon_j)^{-1}\xi-\eta,
\gamma'(\epsilon_j)\eta), \gamma'(\epsilon_j)^{-1}\xi-\eta,
\gamma'(\epsilon_j)\eta)} \,\mathrm{d}\eta.
\end{align*}

We then have, after a change of variables,
\begin{align}
\|T_{j, m}(f, g)\|_2^2&=\int \! T_{j, m}(f, g)(\xi)\overline{T_{j,
m}(f, g)(\xi)} \,\mathrm{d}\xi \nonumber\\
   &=\int \mathrm{d}\tau \iint \! F_{\tau}(x)G_{\tau}(y)A_{\tau}(x, y)e^{-2\pi i
2^m P_{\tau}(x, y)}\,\mathrm{d}x \,\mathrm{d}y,\label{4-1-inner-int}
\end{align}
where
\begin{equation*}
F_{\tau}(x)=\widehat{f}\widehat{\varphi}(x-\tau)\overline{\widehat{f}\widehat{\varphi}(x)},
\end{equation*}
\begin{equation*}
G_{\tau}(y)=\widehat{g}\widehat{\varphi}(y+\gamma'(\epsilon_j)\tau)\overline{\widehat{g}\widehat{\varphi}(y)},
\end{equation*}
\begin{equation*}
A_{\tau}(x, y)=a(x-\tau, y+\gamma'(\epsilon_j)\tau)\overline{a(x,
y)},
\end{equation*}
and
\begin{equation*}
P_{\tau}(x, y)=P_1(x-\tau, y+\gamma'(\epsilon_j)\tau)-P_1(x, y)
\end{equation*}
with
\begin{equation*}
P_1(x, y)=y\phi_{\epsilon_j}(t(x, y), x, y).
\end{equation*}

In order to estimate the inner double integral in
\eqref{4-1-inner-int}, we first show that there exists an
$L\in\mathbb{N}$ such that if $j\geq L$ then
\begin{equation}
\left|\frac{\partial^2 P_{\tau}}{\partial y \partial x}(x,
y)\right|\asymp |\tau|.\label{4-1-phase}
\end{equation}

Recall that $t(x, y)$ satisfies \eqref{critical-point-equ0} (with
$\xi$, $\eta$, and $\epsilon$ replaced by $x$, $y$, and $\epsilon_j$
respectively). By implicit differentiation, we get
\begin{equation*}
\frac{\partial t}{\partial x}(x, y)=-\frac{1}{yQ_{\epsilon_j}''(t(x,
y))} \quad \textrm{and}\quad \frac{\partial t}{\partial y}(x,
y)=-\frac{Q_{\epsilon_j}'(t(x, y))}{yQ_{\epsilon_j}''(t(x, y))}.
\end{equation*}
By \eqref{origin-condition1}, \eqref{origin-condition2}, and
\eqref{origin-condition3}, we then have
\begin{equation*}
\frac{\partial^2 t}{\partial x \partial y}(x,
y)=\frac{1}{y^2}\frac{(Q_{\epsilon_j}'')^2-Q_{\epsilon_j}'Q_{\epsilon_j}'''}{(Q_{\epsilon_j}'')^3}(t(x,
y))\asymp 1
\end{equation*}
and
\begin{equation*}
\frac{\partial^2 t}{\partial y^2}(x,
y)=\frac{1}{y^2}\frac{Q_{\epsilon_j}'(2(Q_{\epsilon_j}'')^2-Q_{\epsilon_j}'Q_{\epsilon_j}''')}{(Q_{\epsilon_j}'')^3}(t(x,
y))\lesssim 1.
\end{equation*}
By using \eqref{critical-point-equ0} we also get
\begin{equation*}
\frac{\partial^2 P_1}{\partial y \partial x}(x, y)=\frac{\partial
t}{\partial y}(x, y).
\end{equation*}
Noticing that $|\gamma'(\epsilon_j)|$ is small if $L$ is large, by
the mean value theorem we get \eqref{4-1-phase}.

Let $\tau_0=(2^m|\gamma'(\epsilon_j)|)^{-1/3}$. We have the
following splitting of \eqref{4-1-inner-int}:
\begin{align*}
&\|T_{j, m}(f, g)\|_2^2=\\
&\quad \left(\int_{|\tau|<\tau_0}+\int_{\tau_0\leq |\tau|\leq
10}\mathrm{d}\tau\right) \cdot\iint \!
F_{\tau}(x)G_{\tau}(y)A_{\tau}(x, y)e^{-2\pi i 2^m P_{\tau}(x,
y)}\,\mathrm{d}x \,\mathrm{d}y.
\end{align*}
Applying the trivial estimate and H{\"o}rmander's \cite[Theorem
1.1]{hormander1973} to the two parts above (and also H\"{o}lder's
inequality) yields
\begin{align*}
\|T_{j, m}(f, g)\|_2^2&\leq
C\tau_0\|f\|_2^2\|g\|_2^2+C\int_{\tau_0\leq |\tau|\leq 10}  \!
(2^m|\tau|)^{-1/2}
\|F_{\tau}\|_2\|G_{\tau}\|_2 \,\mathrm{d}\tau  \\
 &\leq C(\tau_0+(2^m|\gamma'(\epsilon_j)|\tau_0)^{-1/2})\|f\|_2^2\|g\|_2^2\\
 &\leq C(2^m|\gamma'(\epsilon_j)|)^{-1/3}\|f\|_2^2\|g\|_2^2.
\end{align*}

To conclude, the desired estimate \eqref{4-1-estimate1} follows from
\eqref{estimate4-1}, \eqref{4-1-after-holder}, and the estimate
above of $\|T_{j, m}(f, g)\|_2$.


\subsection{Part 2: $j\geq L$, $m\in \mathbb{N}$}\label{case2}

We can find a finite open cover of the interval $[-36C_1, 36C_1]$ by
using open intervals of length $c_1/24$, associated to which we can
construct a partition of unity $\{\chi_s : 1\leq s\leq \Theta\}$
such that  $\sum_s \chi_s\equiv 1$ in $[-36C_1, 36C_1]$ and each
$\chi_s$ is smooth and supported in an interval that belongs to the
finite open cover above.

Lemma \ref{lemma3.1} will be applied to $\chi_s(-\xi/\eta)K_{j,
m}(\xi, \eta)$ (below). Here we denote $S$ to be the collection of
all $1\leq s\leq \Theta$ for which the second statement in Lemma
\ref{lemma3.1} holds. Let $\textbf{I}$ be either $[1/4, 9/4]$ or
$[-9/4, -1/4]$, and
\begin{equation*}
U(\textbf{I}):=\{u_{s, r, \eta}(\xi)\in L^2(\textbf{I}) : s\in S,
r\in\mathbb{R}, 1/16C_1\leq |\eta|\leq 9C_1\},
\end{equation*}
where
\begin{equation*}
u_{s, r, \eta}(\xi)=\chi_s(-\xi/\eta)e^{2\pi i(
2^m\eta\phi_{\epsilon_j}(t(\xi, \eta), \xi, \eta)+r\xi)}.
\end{equation*}

According to Lemma \ref{Li-Thm6.2}, we will finish this part in
three steps.

\noindent{}\textbf{Step 1}: Let $\widehat{f}|_{\textbf{I}}$, the
restriction of $\widehat{f}$ to $\textbf{I}$, be an arbitrary
function in $L^2(\textbf{I})$ that is $\sigma$-uniform in
$U(\textbf{I})$.

We first note that $B_{j, m}^{\varphi}(f, g)(x)$ in the time space
can be expressed as
\begin{equation} B_{j, m}^{\varphi}(f,
g)(x)=|\gamma'(\epsilon_j)|^{1/2} \int_{0}^{\infty} \!
f*\varphi(\gamma'(\epsilon_j)x-2^m t)g*\varphi(x-2^m
Q_{\epsilon_j}(t))\rho(t) \,\mathrm{d}t, \label{time-space-formula}
\end{equation}
which leads to, for $h\in L^2(\mathbb{R})$,
\begin{align*}
&\int \! B_{j, m}^{\varphi}(f, g)(x)h(x) \,
\mathrm{d}x=|\gamma'(\epsilon_j)|^{1/2}\cdot\\
&\quad \sum_{l\in\mathbb{Z}}\iint_{0}^{\infty} \!
f*\varphi(\gamma'(\epsilon_j)x-2^m t)g_{j, m, l}(x-2^m
Q_{\epsilon_j}(t))\rho(t)(\textbf{1}_{|\gamma'(\epsilon_j)|^{-1},
l}h)(x) \,\mathrm{d}t\,\mathrm{d}x,
\end{align*}
where $g_{j, m, l}=1_{I_{j, m, l}}\cdot g*\varphi$ with $I_{j, m,
l}=[\alpha_{j, l}-C_1 2^m, \alpha_{j, l+1}+C_1 2^m]$ and $\alpha_{j,
l}=|\gamma'(\epsilon_j)|^{-1}l$. In the frequency space we then have
\begin{align*}
&\int \! B_{j, m}^{\varphi}(f, g)(x)h(x) \,
\mathrm{d}x=|\gamma'(\epsilon_j)|^{1/2}\cdot\\
&\quad\sum_{l\in\mathbb{Z}}\iiint \!
\widehat{f}\widehat{\varphi}(\xi)e^{2\pi i \gamma'(\epsilon_j)\xi
x}\widehat{g_{j, m, l}}(\eta)e^{2\pi i \eta x}K_{j, m}(\xi,
\eta)(\textbf{1}_{|\gamma'(\epsilon_j)|^{-1}, l}h)(x)
\,\mathrm{d}x\,\mathrm{d}\xi \,\mathrm{d}\eta.
\end{align*}

Let $\widehat{\varphi_1}\in C_{0}^{\infty}(\mathbb{R})$ such that
$\widehat{\varphi_1}(\eta)=1$ if $|\eta|\in [1/8C_1, 9C_1/2]$ and
$\supp \widehat{\varphi_1}\subset \{x\in \mathbb{R} : 1/16C_1\leq
|x|\leq 9C_1\}$. By using
$1=\widehat{\varphi_1}(\eta)+(1-\widehat{\varphi_1}(\eta))$ and the
power series of $e^{2\pi i \gamma'(\epsilon_j)\xi (x-\alpha_{j,
l})}$, we get

\begin{equation*}
\int \! B_{j, m}^{\varphi}(f, g)(x)h(x) \, \mathrm{d}x=I+II,
\end{equation*}
where
\begin{align*}
&I=|\gamma'(\epsilon_j)|^{1/2}\sum_{l\in\mathbb{Z}}\sum_{p=0}^{\infty}\frac{(2\pi
i)^p}{p!}\iint \! \widehat{f}\widehat{\varphi}(\xi)\xi^p e^{2\pi i
\gamma'(\epsilon_j)\alpha_{j, l}\xi}\cdot\\
&\quad\widehat{g_{j, m, l}}(\eta)\widehat{\varphi_1}(\eta) K_{j,
m}(\xi, \eta)\mathcal{F}^{-1}[(\gamma'(\epsilon_j)(\cdot-\alpha_{j,
l}))^p(\textbf{1}_{|\gamma'(\epsilon_j)|^{-1}, l}h)(\cdot)](\eta)
\,\mathrm{d}\xi \,\mathrm{d}\eta
\end{align*}
and
\begin{align*}
&II=|\gamma'(\epsilon_j)|^{1/2}\sum_{l\in\mathbb{Z}}\sum_{p=0}^{\infty}\frac{(2\pi
i)^p}{p!}\iint \! \widehat{f}\widehat{\varphi}(\xi)\xi^p e^{2\pi i
\gamma'(\epsilon_j)\alpha_{j, l}\xi}\cdot\\
&\quad\widehat{g_{j, m, l}}(\eta)(1-\widehat{\varphi_1}(\eta)) K_{j,
m}(\xi, \eta)\mathcal{F}^{-1}[(\gamma'(\epsilon_j)(\cdot-\alpha_{j,
l}))^p(\textbf{1}_{|\gamma'(\epsilon_j)|^{-1}, l}h)(\cdot)](\eta)
\,\mathrm{d}\xi \,\mathrm{d}\eta.
\end{align*}

We first estimate Sum $II$. When $1-\widehat{\varphi_1}(\eta)\neq
0$, Remark \ref{remark2-1} (2) implies that the gradient of the
phase function of $K_{j, m}(\xi, \eta)$ has a uniform lower bound,
which leads to the bound $K_{j, m}(\xi, \eta)=O(2^{-m})$. Then by
H\"{o}lder's inequality we get
\begin{equation*}
|II|\leq
C2^{-m}|\gamma'(\epsilon_j)|^{1/2}\|1_{\textbf{I}}\widehat{f}\|_2\sum_{l\in\mathbb{Z}}\|g_{j,
m, l}\|_2\|\textbf{1}_{|\gamma'(\epsilon_j)|^{-1}, l}h\|_2.
\end{equation*}
Applying the Cauchy-Schwarz inequality yields
\begin{equation}
    |II|\leq \left\{\begin{array}{ll}
    C
2^{-m/2}\|1_{\textbf{I}}\widehat{f}\|_2\|g\|_2\cdot(2^{-m}|\gamma'(\epsilon_j)|)^{1/2}\|h\|_2,
& \textrm{if $|\gamma'(\epsilon_j)|\leq
    2^{-m}$},\\
    C|\gamma'(\epsilon_j)|^{1/2}\|1_{\textbf{I}}\widehat{f}\|_2\|g\|_2\cdot(2^{-m}|\gamma'(\epsilon_j)|)^{1/2}\|h\|_2, & \textrm{if $|\gamma'(\epsilon_j)|> 2^{-m}$}.
    \end{array}\right.\label{4-2-error-1}
\end{equation}

The estimate of Sum $I$, by using the partition of unity we have
constructed at the beginning of this subsection, can be reduced to
\begin{align*}
&I_s=|\gamma'(\epsilon_j)|^{1/2}\sum_{l\in\mathbb{Z}}\sum_{p=0}^{\infty}\frac{(2\pi
i)^p}{p!}\iint \! \widehat{f}\widehat{\varphi}(\xi)\xi^p e^{2\pi i
\gamma'(\epsilon_j)\alpha_{j, l}\xi}\widehat{g_{j, m,
l}}(\eta)\widehat{\varphi_1}(\eta)\cdot\\
&\quad\chi_s(-\xi/\eta) K_{j, m}(\xi,
\eta)\mathcal{F}^{-1}[(\gamma'(\epsilon_j)(\cdot-\alpha_{j,
l}))^p(\textbf{1}_{|\gamma'(\epsilon_j)|^{-1}, l}h)(\cdot)](\eta)
\,\mathrm{d}\xi \,\mathrm{d}\eta
\end{align*}
for any $1\leq s\leq \Theta$. We apply Lemma \ref{lemma3.1} to
$\chi_s(-\xi/\eta)K_{j, m}(\xi, \eta)$. If \eqref{lemma2.1-1} holds,
then $I_s$ is bounded by \eqref{4-2-error-1} too. Hence we may
assume that the second statement in Lemma \ref{lemma3.1} holds.
Applying \eqref{lemma2.1-2} yields
\begin{align*}
I_s=&C(2^{-m}|\gamma'(\epsilon_j)|)^{1/2}\sum_{l\in\mathbb{Z}}\sum_{p=0}^{\infty}\frac{(2\pi i)^p}{p!}\cdot \\
     &\int \! \mathfrak{M}(\eta) \widehat{g_{j, m, l}}(\eta)
\mathcal{F}^{-1}[(\gamma'(\epsilon_j)(\cdot-\alpha_{j,
l}))^p(\textbf{1}_{|\gamma'(\epsilon_j)|^{-1},
l}h)(\cdot)](\eta)\,\mathrm{d}\eta,
\end{align*}
where we have omitted the error term in \eqref{lemma2.1-2} (since it
leads to the same bound as in \eqref{4-2-error-1}), and
$\mathfrak{M}(\eta)$ is defined as
\begin{equation*}
\mathfrak{M}(\eta):=\int_{\textbf{I}} \!  b(\xi,
\eta)\widehat{f}(\xi)\chi_s(-\xi/\eta)e^{-2\pi i (
2^m\eta\phi_{\epsilon_j}(t(\xi, \eta), \xi,
\eta)-\gamma'(\epsilon_j)\alpha_{j, l}\xi)} \,\mathrm{d}\xi
\end{equation*}
with
\begin{equation*}
b(\xi,
\eta)=\widehat{\varphi}(\xi)\widehat{\varphi_1}(\eta)\xi^p\rho(t(\xi,
\eta))|\eta|^{-1/2}|D^2_{t}\phi_{\epsilon_j}(t(\xi, \eta), \xi,
\eta)|^{-1/2}.
\end{equation*}
Using the Fourier series of $b(\xi, \eta)$ and the assumption that
$\widehat{f}|_{\textbf{I}}$ is $\sigma$-uniform in $U(\textbf{I})$,
we have
\begin{equation*}
|\mathfrak{M}(\eta)|\leq
C9^{p}\sigma\|1_{\textbf{I}}\widehat{f}\|_2.
\end{equation*}
Hence by using H\"{o}lder's and the Cauchy-Schwarz inequalities we
get
\begin{equation*}
   |I_s|\leq \left\{\begin{array}{ll}
    C\sigma\|1_{\textbf{I}}\widehat{f}\|_2\|g\|_2\cdot (2^{-m}|\gamma'(\epsilon_j)|)^{1/2}\|h\|_2, & \textrm{if $|\gamma'(\epsilon_j)|\leq
    2^{-m}$},\\
    C(2^m|\gamma'(\epsilon_j)|)^{1/2}\sigma\|1_{\textbf{I}}\widehat{f}\|_2\|g\|_2\cdot (2^{-m}|\gamma'(\epsilon_j)|)^{1/2}\|h\|_2, & \textrm{if $|\gamma'(\epsilon_j)|> 2^{-m}$}.
    \end{array}\right.
\end{equation*}

To conclude Step 1, if $\sigma>2^{-m/2}$, then the bound above of
$I_s$ and \eqref{4-2-error-1} lead to, for $h\in
L^{\infty}(\mathbb{R})$,
\begin{align}
&\left|\int \! B_{j, m}^{\varphi}(f,
g)(x)\textbf{1}_{2^m\gamma'(\epsilon_j)^{-1}, n}(x)h(x) \,
\mathrm{d}x\right| \nonumber\\
&\quad \leq \left\{\begin{array}{ll}
    C\sigma\|1_{\textbf{I}}\widehat{f}\|_2\|g\|_2\|h\|_{\infty}, & \textrm{if $|\gamma'(\epsilon_j)|\leq
    2^{-m}$},\\
    C(2^m|\gamma'(\epsilon_j)|)^{1/2}\sigma\|1_{\textbf{I}}\widehat{f}\|_2\|g\|_2\|h\|_{\infty}, & \textrm{if $|\gamma'(\epsilon_j)|> 2^{-m}$}.
    \end{array}\right.\label{step1-bound}
\end{align}


\noindent{}\textbf{Step 2}: We now  assume that
$\widehat{f}|_{\textbf{I}}\in U(\textbf{I})$.

By using \eqref{time-space-formula}, a change of variables
$x\rightarrow 2^m
\gamma'(\epsilon_j)^{-1}(x+\gamma'(\epsilon_j)Q_{\epsilon_j}(t))$,
and H\"older's inequality, we have, for $h\in
L^{\infty}(\mathbb{R})$,
\begin{align}
\left|\int \! B_{j, m}^{\varphi}(f, g)(x)h(x) \,
\mathrm{d}x\right|&=2^m|\gamma'(\epsilon_j)|^{-1/2}\bigg|\iint_{0}^{\infty}
\!
f*\varphi(2^m(x+\gamma'(\epsilon_j)Q_{\epsilon_j}(t)-t))\cdot \nonumber\\
&\quad\quad g*\varphi(2^m \gamma'(\epsilon_j)^{-1}x)h_{j,
m}(x+\gamma'(\epsilon_j)Q_{\epsilon_j}(t))\rho(t) \,\mathrm{d}t\,\mathrm{d}x\bigg|\nonumber\\
&\leq C\|g\|_2\|T_1(h)\|_2,\label{4-2-bound1}
\end{align}
where $h_{j, m}(x)=h(2^m \gamma'(\epsilon_j)^{-1}x)$ and
\begin{equation*}
T_1(h)(x)=2^{m/2}\int_{0}^{\infty} \!
f*\varphi(2^m(x+\gamma'(\epsilon_j)Q_{\epsilon_j}(t)-t))h_{j,
m}(x+\gamma'(\epsilon_j)Q_{\epsilon_j}(t))\rho(t) \,\mathrm{d}t.
\end{equation*}

Let $\widehat{f}|_{\textbf{I}}=u_{s, r, \eta}(\xi)$ for arbitrarily
fixed $s\in S$, $r\in\mathbb{R}$, and $1/16C_1\leq |\eta|\leq 9C_1$.
By applying the Fourier inversion formula to $f*\varphi$ and
changing variables, we get
\begin{equation}
\|T_1(h)\|_2^2=2^m\int \! \left|\int_0^{\infty}K_{1}(x, t) h_{j,
m}(x-2^{-m}r+\gamma'(\epsilon_j)Q_{\epsilon_j}(t))\rho(t)
\,\mathrm{d}t\right|^2\,\mathrm{d}x, \label{4-2-T-1}
\end{equation}
where
\begin{equation}
K_{1}(x, t)=\int \! \widehat{\varphi}(\xi)\chi_s(-\xi/\eta)e^{2\pi i
2^m\eta[\phi_{\epsilon_j}(t(\xi, \eta), \xi, \eta)+y(x,
t)(\xi/\eta)]}\, \textrm{d} \xi \label{4-2-SP}
\end{equation}
with
\begin{equation*}
y=y(x, t)=x+\gamma'(\epsilon_j)Q_{\epsilon_j}(t)-t.
\end{equation*}

Let $\chi_M$ be a smooth cut-off function supported in $[-M, M]$,
which equals $1$ in $[-M/2, M/2]$. We decompose the right hand side
of \eqref{4-2-T-1} into two parts by using the decomposition
$1=(1-\chi_M(x))+\chi_M(x)$ to restrict the integration domain of
$x$ to $\{x\in \mathbb{R} : |x|\geq M/2\}$ and $\{x\in \mathbb{R} :
|x|<M\}$ respectively for a sufficiently large constant $M$. The
former part is bounded by
\begin{equation}
C2^{-m}\|h\|_{\infty}^2,\label{minkowski-estimate}
\end{equation}
since integration by parts yields $K_{1}(x, t)=O(2^{-m}|x|^{-1})$.

We next consider the latter part with $|x|<M$. After inserting a
partition of unity, we may replace $K_{1}(x, t)$ by
$\widetilde{\chi}(-y(x, t))K_{1}(x, t)$ with a smooth cut-off
function $\widetilde{\chi}$ supported in an interval of sufficiently
small length. Then by repeating the argument in the proof of Lemma
\ref{lemma3.1}, we have that either $\widetilde{\chi}(-y(x,
t))K_{1}(x, t)=O(2^{-m})$ (leading to the bound
\eqref{minkowski-estimate}) or the phase function in \eqref{4-2-SP}
has a critical point $\xi(x, t)$ satisfying
\begin{equation*}
t(\xi(x, t), \eta)=-y(x, t).
\end{equation*}
This equation, together with $\partial_{t}\phi_{\epsilon_j}(t(\xi,
\eta), \xi, \eta)=0$ (namely, the equation
\eqref{critical-point-equ0} satisfied by $t(\xi, \eta)$), yields
\begin{equation*}
\xi(x, t)=-\eta Q_{\epsilon_j}'(-y(x, t)).
\end{equation*}
By using the method of stationary phase in a neighborhood of $\xi(x,
t)$ and integration by parts away from it, we get the following
asymptotic formula.
\begin{equation*}
\begin{split}
\widetilde{\chi}(-y(x, t))K_{1}(x, t)&=C\widetilde{\chi}(-y(x,
t))\chi_s(-\xi(x, t)/\eta)\widehat{\varphi}(\xi(x,
t))|\partial_{\xi}t(\xi(x, t),
\eta)|^{-1/2}\cdot \\
&\quad e^{2\pi i 2^m\eta Q_{\epsilon_j}(-y(x,
t))}2^{-m/2}+O(2^{-3m/2}).
\end{split}
\end{equation*}
By using the leading term above and a change of variables
$u=Q_{\epsilon_j}(t)$, we now need to estimate
\begin{equation}
\int \! \chi_M(x)\left|\int h_{j,
m}(x-2^{-m}r+\gamma'(\epsilon_j)u)k(x, u)e^{2\pi i 2^m\eta
Q_{\epsilon_j}(-y(x, Q_{\epsilon_j}^{-1}(u)))}
\,\mathrm{d}u\right|^2 \,\mathrm{d}x,         \label{T1-part2}
\end{equation}
where
\begin{equation*}
\begin{split}
k(x, u)&=\widetilde{\chi}(-y(x,
Q_{\epsilon_j}^{-1}(u)))\chi_s(-\xi(x,
Q_{\epsilon_j}^{-1}(u))/\eta)\widehat{\varphi}(\xi(x,
Q_{\epsilon_j}^{-1}(u)))\cdot \\
&\quad |\partial_{\xi}t(\xi(x, Q_{\epsilon_j}^{-1}(u)),
\eta)|^{-1/2}\rho(Q_{\epsilon_j}^{-1}(u))(Q_{\epsilon_j}'(Q_{\epsilon_j}^{-1}(u)))^{-1}.
\end{split}
\end{equation*}

We use the $TT^*$ method for $\eqref{T1-part2}$. By changing
variables $u_1=\upsilon+\tau$, $u_2=\upsilon$, followed by
$x\rightarrow x-\gamma'(\epsilon_j)\upsilon$, \eqref{T1-part2}
becomes
\begin{equation}
\int \textrm{d}\tau \int \! H_{\tau}(x) \,\mathrm{d}x \int \!
K_{\tau, x}(\upsilon)e^{2\pi i 2^m\eta P_{\tau, x}(\upsilon)}
\,\mathrm{d}\upsilon, \label{def-T1}
\end{equation}
where all three integrals are over some finite intervals,
\begin{equation*}
H_{\tau}(x)=h_{j,
m}(x-2^{-m}r+\gamma'(\epsilon_j)\tau)\overline{h_{j, m}(x-2^{-m}r)},
\end{equation*}
\begin{equation*}
K_{\tau,
x}(\upsilon)=\chi_M(x-\gamma'(\epsilon_j)\upsilon)k(x-\gamma'(\epsilon_j)\upsilon,
\upsilon+\tau)\overline{k(x-\gamma'(\epsilon_j)\upsilon, \upsilon)},
\end{equation*}
and
\begin{equation*}
P_{\tau, x}(\upsilon)=P_{2}(x+\gamma'(\epsilon_j)\tau,
\upsilon+\tau)-P_{2}(x, \upsilon)
\end{equation*}
with
\begin{equation*}
P_{2}(x,
\upsilon)=Q_{\epsilon_j}(-[x-Q_{\epsilon_j}^{-1}(\upsilon)]).
\end{equation*}

Before applying integration by parts to the innermost integral in
\eqref{def-T1} we first estimate its phase function $P_{\tau,
x}(\upsilon)$. Actually we have that if $|\gamma'(\epsilon_j)|/|x|$
is sufficiently small then
\begin{equation}
|D_{\upsilon}P_{\tau, x}(\upsilon)|\asymp |x||\tau|
\label{claim4-2-1}
\end{equation}
and
\begin{equation}
|D^2_{\upsilon}P_{\tau, x}(\upsilon)|\lesssim |x||\tau|.
\label{claim4-2-2}
\end{equation}
The \eqref{claim4-2-1} follows from the mean value theorem and the
following estimates
\begin{equation*}
\frac{\partial^2 P_{2}}{\partial x\partial \upsilon}(x,
\upsilon)=-\frac{Q_{\epsilon_j}^{''}(-[x-Q_{\epsilon_j}^{-1}(\upsilon)])}{Q_{\epsilon_j}^{'}(Q_{\epsilon_j}^{-1}(\upsilon))}
\asymp 1
\end{equation*}
and
\begin{equation*}
\frac{\partial^2 P_{2}}{\partial \upsilon^2}(x,
\upsilon)=x\cdot\frac{Q_{\epsilon_j}^{'}(-[x-Q_{\epsilon_j}^{-1}(\upsilon)])}{(Q_{\epsilon_j}^{'}(Q_{\epsilon_j}^{-1}(\upsilon)))^2}\cdot
\frac{(Q_{\epsilon_j}'')^2-Q_{\epsilon_j}'Q_{\epsilon_j}'''}{(Q_{\epsilon_j}')^2}(c)\asymp
|x|,
\end{equation*}
where $c$ is between $-[x-Q_{\epsilon_j}^{-1}(\upsilon)]$ and
$Q_{\epsilon_j}^{-1}(\upsilon)$. The \eqref{claim4-2-2} can be
proved similarly.

Therefore, if $|\gamma'(\epsilon_j)|/|x|$ is sufficiently small, for
any $\beta<1$ we have
\begin{equation*}
\left|\int \! K_{\tau, x}(\upsilon)e^{2\pi i 2^m\eta P_{\tau,
x}(\upsilon)} \,\mathrm{d}\upsilon\right|\leq C\min\{1,
(2^m|x||\tau|)^{-1}\}\leq C (2^m|x||\tau|)^{-\beta}.
\end{equation*}
We now estimate \eqref{def-T1} by splitting it into two parts
(depending on the size of $|\gamma'(\epsilon_j)|/|x|$) and using the
trivial estimate and the bound above respectively. Then it is
bounded by
\begin{align}
C(|\gamma'(\epsilon_j)|+2^{-\beta m} )\|h\|_{\infty}^2.
\label{4-2-bound2}
\end{align}

To conclude Step 2,  by \eqref{4-2-bound1}, \eqref{4-2-T-1},
\eqref{minkowski-estimate}, and \eqref{4-2-bound2}, we get, for
$h\in L^{\infty}(\mathbb{R})$,
\begin{align}
&\left|\int \! B_{j, m}^{\varphi}(f,
g)(x)\textbf{1}_{2^m\gamma'(\epsilon_j)^{-1}, n}(x)h(x) \,
\mathrm{d}x\right|   \nonumber \\
&\quad \leq \left\{\begin{array}{ll}
    C2^{-\beta m/2}\|g\|_2\|h\|_{\infty}, & \textrm{if $|\gamma'(\epsilon_j)|\leq
    2^{-m}$},\\
    C(\max\{|\gamma'(\epsilon_j)|, 2^{-\beta m}\})^{1/2}
\|g\|_2\|h\|_{\infty}, & \textrm{if $|\gamma'(\epsilon_j)|>
2^{-m}$}.
    \end{array}\right.\label{step2-bound}
\end{align}

\noindent{}\textbf{Step 3}: To conclude this subsection (namely,
Part 2), by using Lemma \ref{Li-Thm6.2} and the estimates
\eqref{step1-bound} and \eqref{step2-bound}, we get that for any
$\beta<1$
\begin{equation}
\|B_{j, m}^{\varphi}(f, g)\textbf{1}_{2^m\gamma'(\epsilon_j)^{-1},
n}\|_{1}\leq C2^{-\beta m/4}\|f\|_{2} \|g\|_{2}, \quad  \textrm{if
$|\gamma'(\epsilon_j)|\leq 2^{-\beta m}$},\label{estimate4-2-1}
\end{equation}
and
\begin{equation}
\|B_{j, m}^{\varphi}(f, g)\textbf{1}_{2^m\gamma'(\epsilon_j)^{-1},
n}\|_{1}\leq C2^{m/4}|\gamma'(\epsilon_j)|^{1/2}\|f\|_{2} \|g\|_{2},
\quad \textrm{if $|\gamma'(\epsilon_j)|\geq 2^{-\beta
m}$}.\label{estimate4-2-3}
\end{equation}

\subsection{Part 3: Conclusion}

If $|\gamma'(\epsilon_j)|\geq 2^{-\beta m}$, balancing the
\eqref{4-1-estimate2} with \eqref{estimate4-2-3} yields
\begin{align*}
\|B_{j, m}^{\varphi}(f, g)\textbf{1}_{2^m\gamma'(\epsilon_j)^{-1},
n}\|_{1}&\leq C\min\{(2^m|\gamma'(\epsilon_j)|)^{-1/6},
2^{m/4}|\gamma'(\epsilon_j)|^{1/2}\}\|f\|_{2} \|g\|_{2}\\
&\leq C 2^{-m/16}\|f\|_{2} \|g\|_{2}.
\end{align*}
If $|\gamma'(\epsilon_j)|\leq 2^{-\beta m}$, the
\eqref{estimate4-2-1} is already good enough. This finishes the
proof of Proposition \ref{prop4-1}.

%
%
%
%
%
%
%
%
%
%

\section{Estimate of $\|B_{j, m}^{\Phi}(f, g)\|_{1}$}
\label{sec5}

Let $\widehat{\Phi}\in C_{0}^{\infty}(\mathbb{R})$ be supported in
$\{\xi\in\mathbb{R} : 1/2\leq |\xi|\leq 2\}$ and $B_{j, m}^{\Phi}(f,
g)$ be as defined at the beginning of Section \ref{main-estimates}
(with $\varphi$ there replaced by $\Phi$).

\begin{proposition}\label{prop4-2}
Assume that $\Gamma(t)=(t, \gamma(t))\in \textbf{F}(-1, 1)$
(\footnote{We do not need the condition \eqref{mt} for this
proposition.}). For  any $\beta<1$, there exist an $L\in\mathbb{N}$
and a constant $C'_{\beta}$  such that whenever $j\geq L$,
$m\in\mathbb{N}$, and $f, g\in L^2(\mathbb{R})$ we have
\begin{equation}
\|B_{j, m}^{\Phi}(f, g)\|_{1}\leq C'_{\beta}A_{j, m}^{\beta}
\|f\|_{2} \|g\|_{2},          \label{lemmaA-2}
\end{equation}
where $A_{j, m}$ equals $C_{j, m}$ if $|\gamma'(\epsilon_j)|\leq
K_1|\epsilon_j|^{c_1}$ and $C_{j, m}'$ if $|\gamma'(\epsilon_j)|\geq
K_2|\epsilon_j|^{-c_1}$ (with $C_{j, m}$ and $C_{j, m}'$ defined as
in Proposition \ref{prop4-1}).
\end{proposition}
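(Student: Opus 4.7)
The plan is to reduce Proposition \ref{prop4-2} to Proposition \ref{prop4-1} by a spatial localization argument exploiting the Schwartz decay of $\Phi$. The starting observation is that, since $\widehat{\varphi}\equiv 1$ on $\supp\widehat{\Phi}$, we have $\widehat{\Phi}\,\widehat{\varphi}=\widehat{\Phi}$, so
\[
B^{\Phi}_{j,m}(f,g)=B^{\varphi}_{j,m}(f\ast\Phi,\,g\ast\Phi).
\]
Proposition \ref{prop4-1} thus immediately produces a localized $L^{1}$ bound on each interval $J_{n}=a[n,n+1)$, where $a=2^{m}|\gamma'(\epsilon_{j})|^{-1}$ or $a=2^{m}|\gamma'(\epsilon_{j})|$ according to the case. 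The remaining task is to pass from this localized estimate to a global $L^{1}$ estimate; since the naive summation in $n$ of those local bounds diverges, a further refinement is required.

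The refinement is to decompose $f$ and $g$ spatially at the scales that match the two factors in the time--space representation \eqref{time-space-formula}. For $x\in J_{n}$ and $t\in[1/2,2]$, the argument $\gamma'(\epsilon_{j})x-2^{m}t$ of $f\ast\Phi$ ranges over an interval of length $\sim 2^{m}$ centered near $n\cdot 2^{m}$, whereas $x-2^{m}Q_{\epsilon_{j}}(t)$ ranges over an interval of length $\sim a$ centered near $na$ (using $|Q_{\epsilon_{j}}|\leq C_{1}$ and $a\geq 2^{m}$). Accordingly, I write
\[
f=\sum_{k\in\mathbb{Z}}f_{k}, \qquad g=\sum_{\ell\in\mathbb{Z}}g_{\ell},
\]
with $f_{k}$ and $g_{\ell}$ given by smooth partitions of unity at scales $2^{m}$ and $a$ respectively, so that $\sum_{k}\|f_{k}\|_{2}^{2}\lesssim\|f\|_{2}^{2}$ and $\sum_{\ell}\|g_{\ell}\|_{2}^{2}\lesssim\|g\|_{2}^{2}$.

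Then $B^{\Phi}_{j,m}(f,g)=\sum_{k,\ell}B^{\Phi}_{j,m}(f_{k},g_{\ell})$. Because $\Phi$ is Schwartz, $f_{k}\ast\Phi$ and $g_{\ell}\ast\Phi$ are essentially concentrated near the supports of $f_{k}$ and $g_{\ell}$ with polynomial-rate tails; the support analysis above then forces the main contribution to $B^{\Phi}_{j,m}(f_{k},g_{\ell})$ to come from diagonal pairs $|k-\ell|=O(1)$, and each such piece is essentially supported on $O(1)$ intervals $J_{n}$ with $n$ close to $k$. For any diagonal pair, Proposition \ref{prop4-1} applied to $(f_{k}\ast\Phi,g_{\ell}\ast\Phi)$, combined with $\|f_{k}\ast\Phi\|_{2}\leq\|\widehat{\Phi}\|_{\infty}\|f_{k}\|_{2}$ and the analogous bound for $g_{\ell}$, gives
\[
\|B^{\Phi}_{j,m}(f_{k},g_{\ell})\|_{1}\leq C_{\beta}A_{j,m}\|f_{k}\|_{2}\|g_{\ell}\|_{2}.
\]
A Cauchy--Schwarz summation over the diagonal in $(k,\ell)$, together with a geometric summation of the $(1+|k-\ell|)^{-N}$ off-diagonal tails, yields the desired bound $C'_{\beta}A_{j,m}\|f\|_{2}\|g\|_{2}$; since $A_{j,m}\leq 1$, the slightly weaker estimate with $A_{j,m}^{\beta}$ follows.

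The main obstacle is the careful tracking of the Schwartz tails at the two different scales $2^{m}$ and $a$: one must verify that the $2^{m}$-dependent factors arising in Young-type estimates on $f_{k}\ast\Phi$ and $g_{\ell}\ast\Phi$ are genuinely absorbed by polynomial decay, and that the freedom afforded by $\beta<1$ in the statement is sufficient to swallow any residual error (for instance from interpolation against a trivial $L^{2}$-type bound on $B^{\Phi}_{j,m}$).
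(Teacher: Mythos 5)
Your overall strategy (pass to $B^\varphi_{j,m}(f*\Phi,g*\Phi)$, localize spatially at the matching scales $2^m$ and $a=2^m|\gamma'(\epsilon_j)|^{-1}$, split into near-diagonal and far-diagonal, and use the Schwartz decay of $\Phi$ on the far-diagonal) is indeed the right one and matches the spirit of the paper's argument. But there is a genuine gap in the far-diagonal bookkeeping which makes your claimed conclusion false. If you keep the near-diagonal region of \emph{fixed} width $O(1)$, as you propose, then the off-diagonal sum $\sum_{d\geq 1}d^{-N}$ produces a constant $O(1)$, \emph{not} a quantity of size $A_{j,m}$: Proposition~\ref{prop4-1} is never invoked on the far-diagonal pieces, so nothing forces an $A_{j,m}$-dependent decay there. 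The resulting bound is $A_{j,m}+O(1)=O(1)$, which is useless. The paper instead chooses the width of the near-diagonal region \emph{adaptively}, $A=C_{j,m}^{-(1-\beta)/2}$, so that the far-diagonal part is $O(A^{1-N})\lesssim C_{j,m}^{\beta}$ for $N\geq(1+\beta)/(1-\beta)$, while the near-diagonal part contributes $C_{j,m}\cdot A^2=C_{j,m}^{\beta}$ (the $A^2$ counting how many $(k_1,k_2)$-pairs feed into each $x$-interval). This balancing is precisely why the statement has $A_{j,m}^{\beta}$ with $\beta<1$ rather than $A_{j,m}$; your assertion that you can get the stronger bound $A_{j,m}$ directly is exactly what the adaptive cutoff is needed to repair.

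Two further points worth tightening. First, Proposition~\ref{prop4-1} bounds $\|B^\varphi_{j,m}(F,G)\mathbf{1}_{a,n}\|_1$ for a \emph{single} $n$; to conclude $\|B^\Phi_{j,m}(f_k,g_\ell)\|_1\lesssim A_{j,m}\|f_k\|_2\|g_\ell\|_2$ you would have to control the sum over all $n$, which requires the localizing factors to live \emph{inside} the time--space integral \eqref{time-space-formula} (as the paper does, by inserting $\mathbf{1}_{2^m,n+k_1}*\phi_{2^m}$ and $\mathbf{1}_{a,n+k_2}*\phi_{a}$ into the two arguments), rather than outside it. This also dissolves your second issue: since $k$ indexes intervals of length $2^m$ while $\ell$ and $n$ index intervals of length $a$, the condition ``$|k-\ell|=O(1)$'' is ill-posed; the correct diagonal condition is that \emph{both} shifts $k_1$ and $k_2$ relative to $n$ be small, exactly the parametrization the paper uses. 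Finally, you do need to check that each localized piece $\mathbf{1}_{2^m,n+k_1}*\phi_{2^m}\cdot f*\Phi$ has Fourier support inside $\{3/8\leq|\xi|\leq 17/8\}$ so that $\widehat\varphi\equiv 1$ there; the mollifier $\phi_K$ must therefore have very small Fourier support (the paper takes $\supp\widehat\phi\subset[-1/100,1/100]$), a detail your sketch skips.
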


\begin{remark}
This proposition is a consequence of Proposition \ref{prop4-1}. It
is essentially the Lemma 5.1 contained in the arXiv preprint
(arXiv:0805.0107) (which was later published as Li~\cite{Li2013}).
\end{remark}

\begin{proof}[Proof of Proposition \ref{prop4-2}]
We only prove the case when $|\gamma'(\epsilon_j)|\leq
K_1|\epsilon_j|^{c_1}$ while the other case can be handled
similarly. Let $\phi$ be a Schwartz function on $\mathbb{R}$ such
that $\int \!\phi =1$ and $\supp\widehat{\phi}\subset [-1/100,
1/100]$. Denote $\phi_K(x)=K^{-1}\phi(K^{-1}x)$. We have
\begin{align*}
&B_{j, m}^{\Phi}(f, g)(x)=|\gamma'(\epsilon_j)|^{1/2} \sum_{n\in
\mathbb{Z}}\sum_{k_1, k_2\in \mathbb{Z}} \int_{0}^{\infty} \!
(\textbf{1}_{2^m, n+k_1}*\phi_{2^m} \cdot f*\Phi)(\gamma'(\epsilon_j)x-2^m t)\\
&\quad \cdot (\textbf{1}_{2^m\gamma'(\epsilon_j)^{-1},
n+k_2}*\phi_{2^m\gamma'(\epsilon_j)^{-1}} \cdot g*\Phi)(x-2^m
Q_{\epsilon_j}(t))\rho(t) \,\mathrm{d}t \cdot
\textbf{1}_{2^m\gamma'(\epsilon_j)^{-1}, n}.
\end{align*}

We then make the decomposition $B_{j, m}^{\Phi}(f,
g)(x):=\textrm{I}+\textrm{II}$ by splitting the inner summation for
$k_1, k_2$ into two parts such that the first part, denoted by I,
sums over $\{k_1, k_2\in \mathbb{Z} : \max\{|k_1|, |k_2|\}\geq A\}$
and the second one, denoted by II, over $\{k_1, k_2\in \mathbb{Z} :
\max\{|k_1|, |k_2|\}< A\}$ with $A=C_{j, m}^{-(1-\beta)/2}>1$.

Using the fast decay of $\textbf{1}_{2^m, n+k_1}*\phi_{2^m}$ and
$\textbf{1}_{2^m\gamma'(\epsilon_j)^{-1},
n+k_2}*\phi_{2^m\gamma'(\epsilon_j)^{-1}}$ yields that
\begin{align*}
|\textrm{I}|&\leq C|\gamma'(\epsilon_j)|^{1/2}\sum_{\substack{k_1,
k_2\in \mathbb{Z}\\ \max\{|k_1|, |k_2|\}\geq A}}\\
&\quad \int_{0}^{\infty} \! \frac{|f*\Phi(\gamma'(\epsilon_j)x-2^m
t) g*\Phi(x-2^m
Q_{\epsilon_j}(t))\rho(t)|}{(1+|t+k_1|)^{N_1}(1+|\gamma'(\epsilon_j)Q_{\epsilon_j}(t)+k_2|)^{N_2}}
\,\mathrm{d}t\\
&\leq C(A^{1-N_1}+A^{1-N_2})|\gamma'(\epsilon_j)|^{1/2}\cdot \\
&\quad \int_{0}^{\infty} \! |f*\Phi(\gamma'(\epsilon_j)x-2^m t)
g*\Phi(x-2^m Q_{\epsilon_j}(t))\rho(t)| \,\mathrm{d}t
\end{align*}
for any $N_1, N_2\in \mathbb{N}$.

By H\"older's and Young's inequalities, we get
\begin{equation}
\|\textrm{I}\|_1\leq C(A^{1-N_1}+A^{1-N_2})\|f\|_2\|g\|_2\leq
C''_{\beta}C_{j, m}^{\beta}\|f\|_2\|g\|_2,\label{boundI}
\end{equation}
where the second inequality holds whenever $N_1, N_2\geq
(1+\beta)/(1-\beta)$.

On the other hand, since
\begin{equation*}
\supp (\mathcal{F}[\textbf{1}_{2^m, n+k_1}*\phi_{2^m} \cdot
f*\Phi])\subset \{\xi\in\mathbb{R} : 3/8\leq |\xi|\leq 17/8\}
\end{equation*}
and
\begin{equation*}
\supp (\mathcal{F}[\textbf{1}_{2^m\gamma'(\epsilon_j)^{-1},
n+k_2}*\phi_{2^m\gamma'(\epsilon_j)^{-1}} \cdot g*\Phi])\subset
\{\xi\in\mathbb{R} : 3/8\leq |\xi|\leq 17/8\},
\end{equation*}
we then have
\begin{align*}
\textrm{II}&=\sum_{n\in \mathbb{Z}}\sum_{\substack{k_1, k_2\in
\mathbb{Z}\\ \max\{|k_1|, |k_2|\}< A}}B_{j, m}^{\varphi}(\textbf{1}_{2^m, n+k_1}*\phi_{2^m} \cdot f*\Phi,\\
&\qquad \textbf{1}_{2^m\gamma'(\epsilon_j)^{-1},
n+k_2}*\phi_{2^m\gamma'(\epsilon_j)^{-1}} \cdot
g*\Phi)(x)\textbf{1}_{2^m\gamma'(\epsilon_j)^{-1}, n}(x).
\end{align*}

Using Proposition \ref{prop4-1} and the Cauchy-Schwarz inequality,
we have
\begin{equation}
\|\textrm{II}\|_1\leq C_{\beta}C_{j,
m}A^2\|f\|_2\|g\|_2=C_{\beta}C_{j,
m}^{\beta}\|f\|_2\|g\|_2.\label{boundII}
\end{equation}
The desired inequality \eqref{lemmaA-2} follows from \eqref{boundI}
and \eqref{boundII}.
\end{proof}

%
%
%
%
%
%
%
%
%
%

\section{Proof of Theorem \ref{main-theorem}}\label{near-origin}

We prove Theorem \ref{main-theorem} in this section. Let $\rho\in
C_{0}^{\infty}(\mathbb{R})$ be an odd function supported in $\{t\in
\mathbb{R} : 1/2\leq |t| \leq 2 \}$ and $\rho_j(t) =2^j\rho(2^j t)$
such that
\begin{equation*}
1/t = \sum_{j\in \mathbb{Z}}\rho_j(t), \quad \textrm{if $t\neq 0$}.
\end{equation*}
Then
\begin{equation*}
H_{\Gamma}(f, g)(x)=\sum_{j\geq 0} \int_{-1}^{1} \!
f(x-t)g(x-\gamma(t))\rho_j(t) \,\mathrm{d}t.
\end{equation*}
Let $L\in \mathbb{N}$. If $0\leq j\leq L$, we can trivially estimate
the $L^1$-norm of each summand above by H\"older's inequality and
get a bound in the form of $C\|f\|_2\|g\|_2$. Hence we may assume
$j> L$ below. By the Fourier inversion formula we need to estimate
\begin{equation*}
\widetilde{H}_{\Gamma}(f, g)(x)= \sum_{j> L} \iint \!
\widehat{f}(\xi) \widehat{g}(\eta) \mathfrak{m}_j(\xi, \eta) e^{2\pi
i (\xi+\eta)x} \,\mathrm{d}\xi \,\mathrm{d}\eta,
\end{equation*}
where
\begin{equation}\label{mj}
\mathfrak{m}_j(\xi, \eta)=\int_{\mathbb{R}} \! \rho(t)e^{-2\pi
i(2^{-j}\xi t+\eta \gamma(2^{-j}t))}\,\mathrm{d}t.
\end{equation}

Let $\widehat{\Phi}\in C_{0}^{\infty}(\mathbb{R})$ be an even
nonnegative function supported in $\{\xi\in\mathbb{R} : 1/2\leq
|\xi|\leq 2\}$ such that
\begin{equation*}
\sum_{m\in \mathbb{Z}}\widehat{\Phi}(\frac{\xi}{2^m})=1, \quad
\textrm{if $\xi\neq 0$}.
\end{equation*}
Let $m, m'\in \mathbb{Z}$. Set
\begin{align*}
\mathfrak m_{j,m,m'}(\xi,\eta)= \widehat{\Phi}(\frac{\xi}{2^{j+m}})
\widehat{\Phi}(\frac{\eta}{2^{m'}\Delta_j})\mathfrak m_j(\xi,\eta),
\end{align*}
where $\Delta_j$ is defined in Section \ref{statement-theorem}. Then
 $\mathfrak m_j(\xi,\eta)$  can be decomposed as the sum of
\begin{align*}
\mathfrak m_{j,+,+}(\xi,\eta)  &= \sum_{\substack{m,m' \geq 0\\
|m'-m|< C}}  \mathfrak m_{j,m,m'}(\xi,\eta),
\\
\mathfrak m_{j,-,-}(\xi,\eta)  &= \sum_{m< 0}\quad \sum_{m'< 0}
\,\quad\mathfrak m_{j,m,m'}(\xi,\eta),
\\
\mathfrak m_{j,+,-}(\xi,\eta)  &= \sum_{m \geq 0}\sum_{m'\leq m-C}
\,\,\,\,\!\mathfrak m_{j,m,m'}(\xi,\eta),
\end{align*}
and
\begin{equation*}
\mathfrak m_{j,-,+}(\xi,\eta)  = \sum_{m' \geq 0}\sum_{m\leq
m'-C}\,\, \mathfrak m_{j,m,m'}(\xi,\eta),
\end{equation*}
where $C$ is a large constant (to be determined later; see
\eqref{lg80} below). Then
\begin{align*}
\widetilde{H}_{\Gamma}(f, g)(x)&= \sum_{(*,**)\in \mathcal A}
\left(\sum_{j> L}\iint \! \widehat{f}(\xi)\widehat{g}(\eta)
\mathfrak m_{j,*,**}(\xi,\eta) e^{2\pi i(\xi+\eta)x}
\,\mathrm{d}\xi\,\mathrm{d}\eta\right)
\\
&=:\sum_{(*,**)\in\mathcal A}  \widetilde{H}_{(*,**)}(f, g)(x),
\end{align*}
where the index set $\mathcal A$ is given by
\begin{align}\label{pm6}
\mathcal A = \{(+,+), (-,-),(+,-), (-,+)\}.
\end{align}
We split $\widetilde{H}_{\Gamma}(f, g)(x)$ into two parts:

\textbf{Major part}: $(*,**)= (+,+)$;

\textbf{Minor part}: $(*,**) =(-,-)$, $(+,-)$, and $(-,+)$.

The essential difficulty in the proof of Theorem \ref{main-theorem}
lies in the estimates of the major part. All our preparations in
Section \ref{phase-function}-\ref{sec5} are done for it. The minor
part can be reduced to classical paraproducts by using the Taylor
and Fourier series expansions, and then handled by Theorem
\ref{para}.

The following proposition completes the proof of Theorem
\ref{main-theorem}.
\begin{proposition}\label{mm009}
Using the notations above, we have
\begin{enumerate}[(i)]
\item {Major part}: if $(*,**)=(+,+)$, then
\begin{align*}
\| \widetilde{H}_{(*,**)}(f, g)\|_1 \leq C\|f\|_2\|g\|_2.
\end{align*}
\item Minor part: if $(*,**)\neq  (+,+)$, then
\begin{align*}
\| \widetilde{H}_{(*,**)}(f, g)\|_{r} \leq C\|f\|_{p_1}\|g\|_{p_2},
\end{align*}
for all $p_1>1$ and $p_2>1$ such that $1/r= 1/p_1+1/p_2$.
\end{enumerate}
\end{proposition}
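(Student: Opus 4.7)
The plan is to treat the major and minor parts of Proposition \ref{mm009} separately, reducing the major case (i) to Proposition \ref{prop4-2} and the minor cases (ii) to the paraproduct estimate of Theorem \ref{para}.

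For (i), I would first observe that in $\mathfrak m_{j,m,m'}$ the variable $\xi$ is frequency-localized to $|\xi|\sim 2^{j+m}$ and $\eta$ to $|\eta|\sim 2^{m'}\Delta_j$, with $|m-m'|<C$ and $m,m'\geq 0$. Setting
\[
\widehat{f_{j,m}}(\xi)=\widehat{\Phi}(\xi/2^{j+m})\widehat{f}(\xi),\qquad
\widehat{g_{j,m'}}(\eta)=\widehat{\Phi}(\eta/(2^{m'}\Delta_j))\widehat{g}(\eta),
\]
the $(j,m,m')$-contribution to $\widetilde H_{(+,+)}(f,g)$ is, after the affine rescaling that identifies $\mathfrak m_j$ with $K_{j,m}$ on this frequency tube, essentially $B^{\Phi}_{j,m}(f_{j,m},g_{j,m'})$. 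Proposition \ref{prop4-2} then gives, for some $\beta<1$,
\[
\|B^{\Phi}_{j,m}(f_{j,m},g_{j,m'})\|_1 \;\leq\; C\,2^{-\beta m/16}\,\|f_{j,m}\|_2\|g_{j,m'}\|_2.
\]
For the $j$-summation I would use orthogonality on both frequency sides: the $\{f_{j,m}\}_{j>L}$ enjoy the usual Littlewood--Paley almost-orthogonality in $\xi$, while condition \eqref{mt} forces $\{2^{m'}\Delta_j\}_{j>L}$ to be dyadic-like (along each residue class modulo $K_3$), so the $\{g_{j,m'}\}_{j>L}$ enjoy analogous almost-orthogonality in $\eta$. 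Cauchy--Schwarz in $j$ followed by summing the geometric series in $m$ (with $|m-m'|<C$) would then finish (i).

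For (ii), I would reduce each of the cases $(-,-), (+,-), (-,+)$ to the paraproducts of Theorem \ref{para}. In every minor case at least one of $m,m'$ is negative, so on $\supp\rho$ at least one of the phase contributions $2^{-j}\xi t$ or $\eta\gamma(2^{-j}t)$ has amplitude bounded by an absolute constant. For that variable I would expand the corresponding bounded exponential, together with its localizing bump $\widehat{\Phi}$, as a Fourier series in the rescaled variable, producing factors of the type $\widehat{\Phi_{3-l}}(\eta/(2^{m'}\Delta_j))\,e^{2\pi i n_2\eta/(2^{m'}\Delta_j)}$ (and analogously for $\xi$). Any remaining large-phase factor on the other side would then be handled by repeated integration by parts in $t$, yielding Fourier coefficients with arbitrarily fast decay in $(n_1,n_2)$; here the constant $C$ in the definition of $\mathfrak m_{j,+,-}$ and $\mathfrak m_{j,-,+}$ must be taken large enough so that the unexpanded phase is genuinely non-stationary on $\supp\rho$. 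Each resulting term is then a multiplier of the form $\mathcal M_{l,n_1,n_2}$ treated in Theorem \ref{para} (whose dyadic-like hypothesis is supplied by \eqref{mt}), and summing in $(n_1,n_2)$ beats the polynomial loss $(1+n_1^2)^{10}(1+n_2^2)^{10}$ to yield the claimed $L^{p_1}\times L^{p_2}\to L^r$ bound.

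The hardest step will be establishing compatibility between the $m$-decay and the $j$-summation in (i): Proposition \ref{prop4-2} delivers only a fractional power of $2^{-m}$, leaving no room to waste, so genuine orthogonality in $j$ on \emph{both} frequency sides is essential. On the $\eta$-side this is not the standard dyadic orthogonality but the dyadic-like structure of $\{\Delta_j\}$ provided by \eqref{mt}; the very same condition is also what enables Theorem \ref{para} to be invoked in the minor part, so \eqref{mt} plays the decisive role in both halves of the argument.
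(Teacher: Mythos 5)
Your treatment of the major part is essentially the paper's: you rescale to $B_{j,m}^{\Phi}$, invoke Proposition \ref{prop4-2}, use the dyadic-like structure of $\{\Delta_j\}$ from \eqref{mt} to get orthogonality in $j$ on the $\eta$-side, and sum geometrically in $m$. That half of the argument is fine.

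The minor part, however, has a genuine gap in the case $(*,**)=(-,-)$. There both $m<0$ and $m'<0$, so both phase contributions $2^{-j}\xi t$ and $\eta\gamma(2^{-j}t)$ are $O(2^{m'})$ on $\supp\rho$; the total phase is \emph{not} oscillatory and integration by parts in $t$ produces no decay at all. Your plan (expand the bounded exponential as a Fourier series, integrate by parts against the ``remaining large-phase factor'') has no large-phase factor to act on in this case, yet you must still sum over $m'<0$. The paper's device is to Taylor-expand the exponential and observe that the $(p,q)=(0,0)$ coefficient $c_{j,0,0}=\int\!\rho(t)\,\mathrm{d}t$ vanishes because $\rho$ is odd; the surviving $(p,q)\neq(0,0)$ terms then carry the factor $2^{m'(p+q)}\leq 2^{m'}$, which is precisely what makes $\sum_{m'<0}$ converge (and delivers paraproducts of the one-side-away-from-origin form required by Theorem \ref{para}). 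If you insist on a Fourier-series formulation, you would have to surface the same cancellation by noting that the zeroth coefficient is $\int\!\rho+O(2^{m'})$ and again invoke oddness. Without this observation the $(-,-)$ case does not close. A secondary inaccuracy: in the cases $(+,-)$ and $(-,+)$ it is not true that at least one of $m,m'$ is negative (take $m=100$, $m'=50$ in $(+,-)$), nor is the dominated exponential bounded by an absolute constant; what actually drives those two cases is the separation of the two phase derivatives forced by the large constant $C$ (cf.\ \eqref{lg80}), after which integration by parts on the full phase $2^{-j}\xi t+\eta\gamma(2^{-j}t)$ gives simultaneous decay in $2^{-m'}$ (resp.\ $2^{-m}$) and in $(n_1,n_2)$. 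Your reduction to Theorem \ref{para} is correct in spirit for those two cases, but should be justified by phase-derivative dominance rather than boundedness of one exponential.
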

The rest of this section is devoted to the proof of this
proposition.
%
%
%
%
%
%
%
%
%
%
%
%
%
%
%
%
%
%
%
%
%
%
%
%
%
%
%
%
%

\subsection{Estimates of the major part}\label{mp08}

We consider the case $m, m'\in \mathbb{N}$ with $|m-m'|\leq C$ in
this subsection. Actually it suffices to prove the special case
$m'=m\in \mathbb{N}$ to which we can easily reduce the case $m'=m+b$
(for each $1\leq |b|< C$) simply by replacing $\gamma$ by a constant
multiple of $\gamma$. We also notice that there is a symmetry
between the two cases: $t\geq 0$ and $t\leq 0$ and they can be
handled similarly.

With these simplifications it suffices to prove
\begin{equation}
\bigg\|\sum_{j> L} \sum_{m\in \mathbb{N}} T_{j, m}(f,
g)\bigg\|_1\leq C\|f\|_2\|g\|_2,\label{6-1}
\end{equation}
where
\begin{equation*}
T_{j, m}(f, g)(x)=\iint \! \widehat{f}(\xi) \widehat{g}(\eta)
\widehat{\Phi}(\frac{2^{-j}\xi}{2^m})
\widehat{\Phi}(\frac{2^{-j}\gamma'(2^{-j})\eta}{2^{m}})
\mathfrak{m}^{+}_j(\xi, \eta)e^{2\pi i (\xi+\eta)x} \,\mathrm{d}\xi
\,\mathrm{d}\eta
\end{equation*}
with
\begin{equation*}
\mathfrak{m}^{+}_j(\xi, \eta)=\int_{0}^{\infty} \! \rho(t)e^{-2\pi
i(2^{-j}\xi t+\eta \gamma(2^{-j}t))}\,\mathrm{d}t.
\end{equation*}

To prove \eqref{6-1} we first apply a change of variables and get
\begin{align*}
&T_{j, m}(f, g)(x)=2^{2(j+m)}|\gamma'(2^{-j})|^{-1} \cdot  \\
&\quad \iint \! \widehat{f_{j, m}}(\xi)\widehat{\Phi}(\xi)
\widehat{g_{j, m}}(\eta)\widehat{\Phi}(\eta)  e^{2\pi i
(2^{j+m}/\gamma'(2^{-j}))(\gamma'(2^{-j})\xi+\eta)x}K_{j, m}(\xi,
\eta) \,\mathrm{d}\xi \,\mathrm{d}\eta,
\end{align*}
where
\begin{equation*}
f_{j, m}(x)=2^{-j-m}f(2^{-j-m}x),
\end{equation*}
\begin{equation*}
g_{j, m}(x)=2^{-j-m}\gamma'(2^{-j})g(2^{-j-m}\gamma'(2^{-j})x),
\end{equation*}
and
\begin{equation*}
K_{j, m}(\xi, \eta)=\mathfrak{m}^{+}_j(2^{j+m}\xi,
2^{j+m}\eta/\gamma'(2^{-j}))=\int_{0}^{\infty} \! \rho(t)e^{-2\pi i
2^m(\xi t+\eta Q_{2^{-j}}(t))}\,\mathrm{d}t
\end{equation*}
with
\begin{equation*}
Q_{2^{-j}}(t)=\gamma(2^{-j}t)/(2^{-j}\gamma'(2^{-j})).
\end{equation*}

Let $\textbf{1}_0$ be the indicator function of $\{x\in\mathbb{R} :
1/2\leq |x|\leq 2\}$. Then
\begin{align*}
\|T_{j, m}(f, g)\|_1&=2^{j+m}|\gamma'(2^{-j})|^{-1/2}\|B_{j,
m}^{\Phi}(\mathcal{F}^{-1}[\widehat{f_{j, m}}\textbf{1}_0],
\mathcal{F}^{-1}[\widehat{g_{j, m}}\textbf{1}_0])\|_1,\\
  &\leq C2^{-m/32}2^{j+m}|\gamma'(2^{-j})|^{-1/2}\|\widehat{f_{j, m}}\textbf{1}_0\|_2\|\widehat{g_{j,
  m}}\textbf{1}_0\|_2,\\
  &\leq C2^{-m/32}\|\widehat{f}(\cdot)\textbf{1}_0(2^{-j-m}\cdot)\|_2\|\widehat{g}(\cdot)\textbf{1}_0(2^{-j-m}\gamma'(2^{-j})\cdot)\|_2,
\end{align*}
where we have applied Proposition \ref{prop4-1} and \ref{prop4-2} if
 $L$ is sufficiently large. Thus,
\begin{align*}
\bigg\|\sum_{j> L} &\sum_{m\in \mathbb{N}} T_{j, m}(f, g)\bigg\|_1\\
   &\leq C\sum_{m\in \mathbb{N}}2^{-m/32}\sum_{j>
L}\|\widehat{f}(\cdot)\textbf{1}_0(2^{-j-m}\cdot)\|_2\|\widehat{g}(\cdot)\textbf{1}_0(2^{-j-m}\gamma'(2^{-j})\cdot)\|_2\\
   &\leq C \|f\|_2\|g\|_2.
\end{align*}
In the last inequality, we have used the Cauchy-Schwarz inequality
and the bound
\begin{equation*}
\sum_{j> L}\textbf{1}_0(2^{-j-m}\gamma'(2^{-j})\eta)\leq C,
\end{equation*}
which follows from the condition \eqref{mt} (and also Remark
\ref{remark2-1} (3)). This finishes the estimates of the major part.

%
%

\subsection{Estimates of the minor part}\label{minor09}

For the minor part, we begin with $(*,**) =(-,-)$. Notice
\begin{align*}
\mathfrak m_{j,-,-}(\xi,\eta) &=\sum_{m=-\infty}^{m'}\mathfrak
m_{j,m,m'}(\xi,\eta) + \sum_{m'=-\infty}^{m-1}\mathfrak
m_{j,m,m'}(\xi,\eta)
\\
&= :\mathfrak m_{j,-,m'}(\xi,\eta)+\mathfrak m_{j,m,-}(\xi,\eta)
\end{align*}
The treatments of $\mathfrak m_{j,-,m'}(\xi,\eta)$ and $\mathfrak
m_{j,m,-}(\xi,\eta)$ are similar. We show how to handle $\mathfrak
m_{j,-,m'}(\xi,\eta)$. Set
\begin{equation*}
\widehat\Psi(\xi)=\sum\limits_{m\leq
0}\widehat\Phi(\frac{\xi}{2^m}).
\end{equation*}
By employing the Taylor expansion, $\mathfrak m_{j,-,m'}(\xi,\eta)$
is equal to
\begin{align*}
 & \sum_{p=0}^\infty \sum_{q=0}^{\infty}\frac{1}{p!}\frac{1}{q!}
\widehat\Psi(\frac{\xi}{2^{j+m'}})\widehat\Phi(\frac{\eta}{
2^{m'}\Delta_j})
 \int\! \rho (t)(-2\pi i \xi 2^{-j}t)^{p}(-2\pi i\eta \gamma (2^{-j}t))^q \,\mathrm{d}t
  \\
  =& \sum_{p=0}^\infty \sum_{q=0}^{\infty}c_{j,p,q}\frac{1}{p!}\frac{1}{q!} 2^{m'(p+q)}
 \mathcal N_{j,p,q}(\xi,\eta),
\end{align*}
where
\begin{equation}\label{c95}
c_{j,p,q} =  
\int\! \rho (t)(-2\pi i t)^{p}   \big (-2\pi i
\gamma(2^{-j}t)\Delta_j) \big)^q \,\mathrm{d}t
\end{equation}
and
\begin{equation*}
\mathcal
N_{j,p,q}(\xi,\eta)=\widehat\Psi(\frac{\xi}{2^{j+m'}})(\frac{\xi}{2^{j+m'}})^p\widehat\Phi(\frac{\eta}{
2^{m'}\Delta_j})
 (\frac{\eta}{ 2^{m'}\Delta_j})^q.
\end{equation*}
Since $\rho$ is an odd function, $c_{j,0,0}=0$ for all $j\in\mathbb
Z$ and thus we do not need to consider
$\mathcal N_{j,0,0}(\xi,\eta)$ for all $j>L$.
This yields a decay factor as follows
\begin{equation*}
2^{m'(p+q)} \leq 2^{m'} \quad{\rm if}\quad (p,q) \neq (0,0),
\end{equation*}
which allows us to sum over $m'\leq 0$ later.

The condition \eqref{origin-condition1} gives
\begin{equation*}
|c_{j,p,q}|\leq \|\rho\|_1(4\pi)^p(2\pi C_1)^q
\end{equation*}
and
\begin{equation*}
\sum_{p,q \geq 0 }\frac  1 {p!} \frac 1  {q!} |c_{j,p,q}| < C<\infty
\end{equation*}
for some constant $C$ which is independent of $j$.

Set
\begin{align*}
\mathcal N_{p,q}(\xi,\eta) =\sum_{j >L} \mathcal N_{j,p,q}(\xi,\eta)
=
\sum_{j>L}\widehat\Psi(\frac{\xi}{2^{j+m'}})(\frac{\xi}{2^{j+m'}})^p\widehat\Phi(\frac{\eta}{
2^{m'}\Delta_j })
 (\frac{\eta}{ 2^{m'}\Delta_j })^q.
\end{align*}
It suffices to show that $\mathcal N_{p,q}$, as a bilinear
multiplier, maps $L^{p_1}\times L^{p_2}$ to $L^r$ with a bound
independent of ${m'}$. Indeed, the dependence of ${m'}$ can be
removed easily via the following claim:
\begin{claim}\label{cl07}
Assume $\mathcal M(\xi,\eta)$, as a symbol for a bilinear
multiplier, maps $L^{p_1}\times L^{p_2}$ to $L^r$ with a bound $A$.
Here $p_1>1$, $p_2>1$, and $ 1/p_1+1/p_2=1/r. $
Let $R>0$ be any constant. Then $\mathcal M_R(\xi,\eta)=\mathcal
M(R\xi,R\eta)$
is also a bounded bilinear multiplier which maps $L^{p_1}\times L^{p_2}$ to $L^r$ with the same bound $A$. 
\end{claim}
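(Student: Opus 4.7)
The plan is to reduce $\mathcal M_R$ back to $\mathcal M$ by a dilation on the input functions and then exploit the scale-invariance guaranteed by the exponent condition $1/p_1+1/p_2=1/r$. Let me introduce the bilinear operators
\begin{equation*}
T_{\mathcal M}(f_1,f_2)(x)=\iint\! \mathcal M(\xi,\eta)\widehat{f_1}(\xi)\widehat{f_2}(\eta) e^{2\pi i(\xi+\eta)x}\,\mathrm{d}\xi\,\mathrm{d}\eta,
\end{equation*}
and similarly $T_{\mathcal M_R}$ with $\mathcal M$ replaced by $\mathcal M_R$. For $R>0$, write $D_R f(y)=f(Ry)$, so that $\widehat{D_R f}(\xi)=R^{-1}\widehat{f}(\xi/R)$.

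The first step is a change of variables $\xi\mapsto R\xi$, $\eta\mapsto R\eta$ in the double integral defining $T_{\mathcal M_R}(f,g)(x)$. Pulling out the Jacobian $R^{-2}$ and rewriting $\widehat f(\xi/R)=R\,\widehat{D_R f}(\xi)$ and $\widehat g(\eta/R)=R\,\widehat{D_R g}(\eta)$, two factors of $R$ combine with $R^{-2}$ to cancel, and the exponential becomes $e^{2\pi i(\xi+\eta)(x/R)}$. This produces the scaling identity
\begin{equation*}
T_{\mathcal M_R}(f,g)(x)=T_{\mathcal M}(D_R f,\,D_R g)(x/R).
\end{equation*}

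The second step is to take the $L^r$ norm on both sides. A change of variables $y=x/R$ gives $\|T_{\mathcal M_R}(f,g)\|_r=R^{1/r}\|T_{\mathcal M}(D_R f,D_R g)\|_r$. Applying the hypothesized bound with constant $A$ yields
\begin{equation*}
\|T_{\mathcal M_R}(f,g)\|_r\leq A\,R^{1/r}\|D_R f\|_{p_1}\|D_R g\|_{p_2}.
\end{equation*}
Since $\|D_R h\|_{p}=R^{-1/p}\|h\|_p$, the right-hand side equals $A\,R^{1/r-1/p_1-1/p_2}\|f\|_{p_1}\|g\|_{p_2}$, and by the assumed relation $1/p_1+1/p_2=1/r$ the exponent of $R$ vanishes, delivering the same constant $A$.

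I do not anticipate any genuine obstacle: the whole argument is a one-line homogeneity computation, and the only thing that must be checked carefully is that all the $R$-powers collected from the Jacobian, from the Fourier dilation identity, from the $L^r$ rescaling of the output, and from the $L^{p_i}$ rescaling of the inputs combine via $1/p_1+1/p_2=1/r$ to give exactly $1$. This is precisely why the hypothesis $1/p_1+1/p_2=1/r$ appears in the statement.
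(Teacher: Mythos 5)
Your proof is correct and is precisely the ``standard rescaling argument'' that the paper invokes without writing out: the dilation identity $T_{\mathcal M_R}(f,g)(x)=T_{\mathcal M}(D_R f,D_R g)(x/R)$ followed by the homogeneity bookkeeping in $R$, which cancels exactly because $1/p_1+1/p_2=1/r$. Nothing to add.
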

Claim \ref{cl07} can be proved by a standard rescaling argument and
we omit the details here.

Applying the same arguments to $\mathfrak m_{j,m,-}(\xi,\eta)$, we
obtain the corresponding multiplier
\begin{equation*}
\tilde{\mathcal N}_{p,q}(\xi,\eta) = \sum_{j>L}
\widehat\Phi(\frac{\xi}{2^{j+m-1}})(\frac{\xi}{2^{j+m-1}})^p
\widehat\Psi(\frac \eta {2^{m-1}\Delta_j })(\frac \eta
{2^{m-1}\Delta_j })^q.
\end{equation*}
Again, Claim \ref{cl07} allows us to dispose the factor $2^{m-1}$ on
the right hand side.

To sum up, the case $(*,**)=(-,-)$ is reduced to the problem
concerning the boundedness of the bilinear multipliers whose symbols
are given by
\begin{equation*}
\sum_{j>L}\widehat\Psi(\frac{\xi}{2^{j}})(\frac{\xi}{2^{j}})^p\widehat\Phi(\frac{\eta}{
\Delta_j })(\frac{\eta}{ \Delta_j })^q
\end{equation*}
and
\begin{equation*}
\sum_{j>L
}\widehat\Phi(\frac{\xi}{2^{j}})(\frac{\xi}{2^{j}})^p\widehat\Psi(\frac{\eta}{
\Delta_j })(\frac{\eta}{ \Delta_j })^q,
\end{equation*}
which is covered by Theorem \ref{para} as a special case when
$(n_1,n_2)=(0,0)$.


Now we turn to $(*,**)=(-,+)$. Notice
\begin{align*}
\mathfrak m_{j,-,+} (\xi,\eta)=\sum_{m'\geq 0}  \mathfrak
m_{j,-,m'}(\xi,\eta)
\end{align*}
where
\begin{align*}
\mathfrak m_{j,-,m'}(\xi,\eta) = \sum_{m=-\infty}^{m'-C}\mathfrak
m_{j,m,m'}(\xi,\eta).
\end{align*}
Applying the Fourier series of $\mathfrak m_{j}(\xi,\eta)$ yields
\begin{align*}         
&\mathfrak m_{j,-,m'}(\xi,\eta)
\\
= &
\widehat\Psi(\frac{\xi}{2^{j+{m'}-C}})\widehat\Phi(\frac{\eta}{2^{m'}
\Delta_j}) \mathfrak{m}_j(\xi, \eta)
\\
=&
\widehat\Psi(\frac{\xi}{2^{j+{m'}-C}})\widehat\Phi(\frac{\eta}{2^{m'}
\Delta_j}) \sum_{n_1,n_2\in \mathbb Z} a_{n_1,n_2} e^{2\pi i (n_1
\frac \xi {2^{j+{m'}-C}}+n_2 \frac \eta {2^{m'}\Delta_j})}.
\end{align*}
Since
\begin{align}\label{lg80}
\left|D_t\left(\eta \gamma (2^{-j}t)\right)\right|  = \left |\frac
\eta {\Delta_j}\right | >2^9 |D_t(\xi 2^{-j}t)|
\end{align}
given $C$ sufficiently large, integration by parts gives the
following fast decay
\begin{equation*}
|a_{n_1,n_2}| \leq C_N (1+n_1^2+n_2^2)^{-N} 2^{-{m'}N}\quad
\textrm{for any}\quad N\in\mathbb N.
\end{equation*}
Consequently, we only need to deal with a fixed ${m'}$ and a fixed pair $(n_1,n_2)$. 
Set
\begin{equation}\label{good1}
\mathcal N_{n_1,n_2} (\xi,\eta) = \sum_{j>L}
\widehat\Psi(\frac{\xi}{2^{j+{m'}-C}})e^{2\pi i n_1 \frac \xi
{2^{j+{m'}-C}}} \widehat\Phi(\frac{\eta}{2^{m'} \Delta_j}) e^{2\pi
in_2 \frac \eta {2^{m'}\Delta_j}}.
\end{equation}
Similarly, in the case $(*,**)=(+,-)$ we need to handle the
following multiplier
\begin{equation}\label{good2}
\tilde{\mathcal N}_{n_1,n_2} (\xi,\eta)=
 \sum_{j>L} \widehat\Phi(\frac{\xi}{2^{j+m}})e^{2\pi i n_1 \frac \xi {2^{j+m}}}
\widehat\Psi(\frac{\eta}{2^{m-C} \Delta_j}) e^{2\pi in_2 \frac \eta
{2^{m-C}\Delta_j}}.
\end{equation}
By Claim \ref{cl07}, the factor $2^{m'}$ in \eqref{good1} and the
 factor $2^m$ in \eqref{good2} are disposable. Thus we reduce our
problem to establishing the boundedness of the paraproducts whose
symbols are given by
\begin{equation}\label{cc1}
 \sum_{j>L} \widehat\Psi(\frac{\xi}{2^{j-C}})e^{2\pi i n_1 \frac \xi {2^{j-C}}}
\widehat\Phi(\frac{\eta}{ \Delta_j}) e^{2\pi in_2 \frac \eta
{\Delta_j}}
\end{equation}
and
\begin{equation}\label{cc2}
\sum_{j>L} \widehat\Phi(\frac{\xi}{2^{j}})e^{2\pi i n_1 \frac \xi
{2^{j}}} \widehat\Psi(\frac{\eta}{2^{-C} \Delta_j}) e^{2\pi in_2
\frac \eta {2^{-C}\Delta_j}},
\end{equation}
with bounds whose growth rates are at most a polynomial of
$(1+n_1^2+n_2^2)$, which also follows from Theorem \ref{para}.
Indeed, Theorem \ref{para} is applicable to the multiplier
\eqref{cc2}, since the sequence $\{2^{-C}\Delta_j\}_{j>L}$ satisfies
the condition \eqref{dl}. For the multiplier \eqref{cc1}, one can
perform a rescaling argument $(\xi,\eta)\to (2^{-C}\xi, 2^{-C}\eta)$
and notice the sequence $\{2^C\Delta_j\}_{j>L}$ satisfies the
condition \eqref{dl}, which allows us to apply Theorem \ref{para}
again. This finishes the estimates of the minor part.

%
%
%
%
%
%
%
%
%
%
\section{The bilinear maximal functions}\label{max77}

This section is devoted to the proof of Theorem \ref{maxT}.
The arguments we use here are essentially those from \cite[Section
7]{Li-Xiao}. Recall that
\begin{align*}
M_\Gamma(f,g)(x) =\sup_{0<\epsilon <1} \epsilon^{-1} \int_{0
}^\epsilon\! f(x-t)g(x-\gamma(t)) \,\mathrm{d}t ,
\end{align*}
where we have assumed that $f$ and $g$ are both nonnegative. We want
to show
\begin{align}\label{mx80}
\| M_\Gamma(f,g) \|_1\leq C\|f\|_2\|g\|_2.
\end{align}
The proof of (\ref{mx80}) is almost identical to the proof of
Theorem \ref{main-theorem} in Section \ref{near-origin}. One
noticeable difference is that the minor part of the bilinear maximal
function can be controlled pointwisely by the Hardy-Littlewood
maximal function.

Let $\rho \in C^{\infty}([1/4, 1])$ be nonnegative with $\rho(1/2)
=1$. Set $\rho_j(t) = 2^{j}\rho(2^jt)$. It suffices to establish the
boundedness of the following maximal function
\begin{equation*}
M^*(f,g) (x)= \sup_{j >L} \int\! f(x-t)g(x-\gamma(t))
\rho_j(t)\,\mathrm{d}t=:  \sup_{j >L}  M_j(f,g)(x),
\end{equation*}
where $L\in\mathbb N$ is sufficiently large.

Like what we did in Section \ref{near-origin}, we have
\begin{align*}
M_j(f,g)(x) &= \iint\! \widehat f(\xi)\widehat g(\eta) \mathfrak
m_{j}(\xi, \eta) e^{2\pi i
(\xi+\eta)x}\,\mathrm{d}\xi\,\mathrm{d}\eta
\\
&= \sum_{(*,**)\in \mathcal A} \iint\!  \widehat f(\xi)\widehat
g(\eta) \mathfrak m_{j,*,**}(\xi,\eta)  e^{2\pi i
(\xi+\eta)x}\,\mathrm{d}\xi\,\mathrm{d}\eta
\\
&= :\sum_{(*,**)\in \mathcal A} M_{j,*,**}(f,g)(x),
\end{align*}
where $\mathfrak m_j(\xi,\eta)$ and  $\mathcal A$ are as defined in
(\ref{mj}) and (\ref{pm6}) respectively. It suffices to prove
\begin{align}\label{mm90}
\left\|\sup_{j}|M_{j,*,**}(f,g)|\right\|_1 \leq C\|f\|_2\|g\|_2
\end{align}
for each pair $(*,**)\in\mathcal A$.
\begin{lemma}[Minor part]\label{lm9}
Let $M(f)$ denote the Hardy-Littlewood maximal function of $f$. If
$(*,**)\neq (+,+)$, then there is a constant $C>0$ such that
\begin{align*}
\sup_{j>L} |M_{j,*,**}(f,g)(x)| \leq CM(f)(x)M(g)(x).
\end{align*}
\end{lemma}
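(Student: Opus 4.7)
The plan is to establish, for each fixed $j>L$ and each of the three minor-part indices $(*,**)\in\{(-,-),(+,-),(-,+)\}$, the pointwise estimate $|M_{j,*,**}(f,g)(x)|\leq CM(f)(x)M(g)(x)$ with $C$ independent of $j$; taking $\sup_{j>L}$ then yields the lemma. The strategy parallels the Fourier-series reductions performed in Section \ref{minor09} for the Hilbert transform, but for the maximal function we can work pointwise and need not pass through paraproduct bounds.

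I would first carry out the dyadic collapse of Section \ref{minor09} (summing over $m,m'<0$ for $(-,-)$, over $m'\leq m-C$ with $m\geq 0$ as an outer parameter for $(+,-)$, and symmetrically for $(-,+)$) and then expand $\mathfrak m_j(\xi,\eta)$ in a Fourier series on each resulting support rectangle. The identity $\eta\gamma(2^{-j}t)=(\eta/\Delta_j)\,\mathrm{sgn}(\gamma'(2^{-j}))\,Q_{2^{-j}}(t)$ shows, via \eqref{mj} together with the uniform $C^N$ bounds \eqref{origin-condition1} on $Q_{\epsilon}$, that $\mathfrak m_j$ is a smooth function of $(\xi/2^j,\eta/\Delta_j)$ whose derivatives are bounded independently of $j$. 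This produces expansions of the form
\begin{equation*}
\mathfrak m_{j,+,-}(\xi,\eta)=\sum_{m\geq 0}\widehat\Phi\!\left(\tfrac{\xi}{2^{j+m}}\right)\widehat\Psi\!\left(\tfrac{\eta}{2^{m-C}\Delta_j}\right)\sum_{n_1,n_2\in\mathbb Z}a^{(j,m)}_{n_1,n_2}\,e^{2\pi i(n_1\xi/2^{j+m}+n_2\eta/(2^{m-C}\Delta_j))},
\end{equation*}
and analogously for $(-,-)$ and $(-,+)$. Because $m'\leq m-C$ with $C$ large makes $|D_t(\xi 2^{-j}t)|$ dominate $|D_t(\eta\gamma(2^{-j}t))|$ on the support, repeated integration by parts in $t$ yields $|a^{(j,m)}_{n_1,n_2}|\leq C_{N,N'}(1+n_1^2+n_2^2)^{-N}2^{-mN'}$ uniformly in $j$; the mirror argument based on \eqref{lg80} supplies $2^{-m'N'}$ in $(-,+)$, and no outer decay factor is needed for $(-,-)$.

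Inserting the expansion into the bilinear pairing, each Fourier term becomes
\begin{equation*}
a^{(j,m)}_{n_1,n_2}\,(f*\Phi^1_{j,m})\!\left(x+\tfrac{n_1}{2^{j+m}}\right)(g*\Psi^2_{j,m})\!\left(x+\tfrac{n_2}{2^{m-C}\Delta_j}\right),
\end{equation*}
where $\Phi^1_{j,m}$ and $\Psi^2_{j,m}$ are Schwartz bumps at the relevant scales. The standard shifted approximate-identity bound $|f*\Phi_\alpha(x+h)|\leq C_N(1+\alpha|h|)^N M(f)(x)$ controls each factor by $C_N(1+|n_i|)^N M(f)(x)$ or $C_N(1+|n_i|)^N M(g)(x)$ at the point $x$ itself; the $(1+n_1^2+n_2^2)^{-N}$ decay of the coefficients overwhelms this polynomial growth, so the $(n_1,n_2)$-series converges uniformly, while the outer sum over $m\geq 0$ (resp. $m'\geq 0$) converges geometrically via the $2^{-mN'}$ (resp. $2^{-m'N'}$) gain. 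The main obstacle is confirming that the Fourier coefficients $a^{(j,m)}_{n_1,n_2}$ enjoy the stated decay with constants independent of $j$; this reduces to the uniform-in-$j$ smoothness of $G_j(a,b):=\int\rho(t)e^{-2\pi i(at+b\,\mathrm{sgn}(\gamma'(2^{-j}))\,Q_{2^{-j}}(t))}\,dt$, which follows from \eqref{origin-condition1}, after which the integration-by-parts argument of Section \ref{minor09} transfers unchanged.
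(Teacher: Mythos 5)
Your proposal is correct and follows the same essential route as the paper: collapse the dyadic sums, expand the smooth factor of the multiplier into a rapidly summable series of tensor products, and then use the fact that a shifted Schwartz convolution at a shifted point is controlled by the Hardy--Littlewood maximal function with a constant growing only polynomially in the shift. For the off-diagonal pieces $(+,-)$ and $(-,+)$ your argument is identical to the paper's (Fourier series expansion, decay of $a_{n_1,n_2}$ in $(n_1,n_2)$ and geometric decay in the outer scale parameter from the gradient comparison \eqref{lg80} and its mirror). The only divergence is in the $(-,-)$ piece: you propose a Fourier series expansion on the support rectangle, whereas the paper Taylor-expands the exponential in the phase, producing the coefficients $c_{j,p,q}$ of \eqref{c95}. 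Both work because on the low--low region the phase is uniformly bounded by \eqref{origin-condition1}; the paper's Taylor route gives an absolutely summable double series with $1/(p!q!)$ beating the exponential growth, while your Fourier route trades this for polynomial decay of Fourier coefficients against polynomial growth of the shifted-maximal constants. Either way, the uniform-in-$j$ bounds on $Q_{2^{-j}}$ and its derivatives from \eqref{origin-condition1} are the engine, and you correctly identify this as the point that needs checking. Your proof is sound; the $(-,-)$ variant is a cosmetic difference that buys nothing but also costs nothing.
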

Assume this lemma for a moment. The Cauchy-Schwarz inequality and
the Hardy-Littlewood maximal function yield
\begin{align*}
\left\|\sup_{j>L} |M_{j,*,**}(f,g)|\right\|_{r} & \leq C\|
M(f)M(g)\|_{r}
\\
&\leq C\|M(f)\|_{p_1}\|M(g)\|_{p_2}
\\&
 \leq C \|f\|_{p_1}\|g\|_{p_2}.
\end{align*}
In particular, if $(*,**)\neq (+,+)$, (\ref{mm90}) follows from the
 inequality above by taking $(r,p_1,p_2) = (1,2,2)$.
\begin{proposition}[Major part]
If $(*,**) = (+,+)$, we have
\begin{align*}
\left\|\sup_{j>L} |M_{j,+,+}(f,g)|\right \|_1 \leq C \|f\|_2\|g\|_2.
\end{align*}
\end{proposition}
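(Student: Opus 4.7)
The plan is to mirror the major-part argument from Section \ref{mp08} and exploit the exponential gain in $m$ to render the supremum over $j$ essentially free. As in Section \ref{mp08}, decompose
\begin{equation*}
M_{j,+,+}(f,g) = \sum_{\substack{m,m'\geq 0\\ |m-m'|<C}} T_{j,m,m'}(f,g),
\end{equation*}
where $T_{j,m,m'}$ is the bilinear multiplier whose symbol is $\widehat{\Phi}(\xi/2^{j+m})\widehat{\Phi}(\eta/(2^{m'}\Delta_j))\mathfrak{m}_{j}(\xi,\eta)$, restricted to the $t>0$ branch as in Section \ref{mp08}. For each fixed offset $b=m'-m$ with $|b|<C$, replacing $\gamma$ by a constant multiple of $\gamma$ (which preserves membership in $\mathbf{F}(-1,1)$) reduces to the diagonal case $m'=m$, so it suffices to establish
\begin{equation*}
\Big\|\sup_{j>L}\Big|\sum_{m\in\mathbb{N}} T_{j,m}(f,g)\Big|\Big\|_1 \leq C\|f\|_2\|g\|_2,
\end{equation*}
where $T_{j,m}$ is exactly the operator defined at the beginning of Section \ref{mp08}.

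The decisive observation is that for the maximal version no square-function machinery is required; the trivial pointwise bound
\begin{equation*}
\sup_{j>L}\Big|\sum_{m\in\mathbb{N}} T_{j,m}(f,g)(x)\Big| \leq \sum_{m\in\mathbb{N}}\sum_{j>L}|T_{j,m}(f,g)(x)|
\end{equation*}
reduces the problem to a summed $L^1$ estimate already contained in Section \ref{mp08}. Indeed, Propositions \ref{prop4-1} and \ref{prop4-2}, applied after the rescaling performed there, give (for $L$ sufficiently large)
\begin{equation*}
\|T_{j,m}(f,g)\|_1 \leq C\, 2^{-m/32}\,\|\widehat{f}\,\mathbf{1}_0(2^{-j-m}\cdot)\|_2\,\|\widehat{g}\,\mathbf{1}_0(2^{-j-m}\gamma'(2^{-j})\cdot)\|_2,
\end{equation*}
and Cauchy--Schwarz in $j$, combined with the bounded overlap of the $\xi$-annuli $\{|\xi|\sim 2^{j+m}\}$ and of the $\eta$-annuli $\{|\eta|\sim 2^{m}\Delta_j\}$ (the latter furnished by condition \eqref{mt}), yields
\begin{equation*}
\sum_{j>L}\|T_{j,m}(f,g)\|_1 \leq C\, 2^{-m/32}\,\|f\|_2\|g\|_2.
\end{equation*}
Summing the geometric series in $m$ then closes the estimate.

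The only point that deserves a careful check is that the argument in Section \ref{mp08} really delivers the stronger bound on $\sum_{j>L}\|T_{j,m}(f,g)\|_1$ rather than merely on $\|\sum_{j>L} T_{j,m}(f,g)\|_1$, but this is automatic, since there the $L^1$ norm is brought inside the sum in $j$ before Cauchy--Schwarz is invoked. Consequently the $m$-decay inherited from Propositions \ref{prop4-1}--\ref{prop4-2} is already strong enough that the inclusion $\ell^\infty_j\hookrightarrow\ell^1_j$ costs nothing, and the maximal estimate for the major part follows from the non-maximal one at essentially no extra cost.
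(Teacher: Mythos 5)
Your proposal is correct and takes essentially the same approach as the paper: bound the supremum over $j$ by the sum of absolute values over $j$ (and $m$), then observe that the argument in Subsection~\ref{mp08} already delivers the summed estimate $\sum_{j>L}\sum_{m}\|T_{j,m}(f,g)\|_1\leq C\|f\|_2\|g\|_2$, since the $L^1$ norm was brought inside both sums before Cauchy--Schwarz was applied. The paper states this more tersely by citing \eqref{6-1}, but the mechanism is identical.
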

This proposition is essentially the result obtained in Subsection
\ref{mp08}. Indeed, we have the following pointwise estimate
\begin{align*}
\sup_{j>L} |M_{j,+,+}(f,g)(x)| \leq  \sum_{j>L} |M_{j,+,+}(f,g)(x)|.
\end{align*}
Then (\ref{6-1}) implies
\begin{align*}
\left\|\sup_{j>L} |M_{j,+,+}(f,g)| \right\|_1\leq \bigg \|
\sum_{j>L} |M_{j,+,+}(f,g)|\bigg\|_1\leq C\| f\|_2\|g\|_2.
\end{align*}

It remains to verify Lemma \ref{lm9}.
We consider the case $(*,**) =(-,-)$ first. Most of the calculation
in Subsection \ref{minor09} remains valid. In particular, we have
\begin{align*}
\mathfrak m_{j,-,-}(\xi,\eta): &= \sum_{p,q\in\mathbb N} c_{j,p,q}
\frac{1}{p!}\frac{1}{q!} \widehat\Psi({\frac{\xi}{2^{j}}} )
(\frac{\xi}{2^j})^p\widehat \Psi(\frac \eta {\Delta_j })(
\frac{\eta}{\Delta_j})^q.
\end{align*}
Notice
\begin{align*}
\sup_{j\in\mathbb N}\left |\int\!\widehat\Psi({\frac{\xi}{2^{j}}} )
(\frac{\xi}{2^j})^p \widehat f(\xi) e^{2\pi i \xi
x}\,\mathrm{d}\xi\right | \leq C_1' M(f)(x)
\end{align*}
and
\begin{align*}
\sup_{j\in\mathbb N}\left|\int\! \widehat \Psi(\frac \eta {\Delta_j
})( \frac{\eta}{\Delta_j})^q \widehat g (\eta)e^{2\pi i \eta
x}\,\mathrm{d}\eta\right | \leq C_2' M(g)(x),
\end{align*}
where $C_1'$ and $C_2'$ depend at most exponentially on $p$ and $q$.
Thus
\begin{align*}
\sup_{j\in\mathbb N} \left | \iint\! \widehat f(\xi) \widehat
g(\eta) \mathfrak m_{j,-,-} (\xi,\eta) e^{2\pi i (\xi+\eta)x}
\,\mathrm{d}\xi\,\mathrm{d}\eta \right| \leq CM(f)(x)M(g)(x),
\end{align*}
which proves Lemma \ref{lm9} when $(*,**)=(-,-)$.

The cases $(-,+)$ and $(+,-)$ are exactly the same. We only show how
to handle the former one. Using the same notations as in Subsection
\ref{minor09}, we have
\begin{align}\label{th00}
\mathfrak m_{j,-,+}(\xi,\eta) =\sum_{m'\geq 0} \mathfrak
m_{j,-,m'}(\xi,\eta)
\end{align}
and
\begin{align}\label{bb_X}
\mathfrak m_{j,-,m'}(\xi,\eta) =
\widehat\Psi(\frac{\xi}{2^{j+{m'}-C}})\widehat\Phi(\frac{\eta}{2^{m'}
\Delta_j}) \sum_{n_1,n_2\in \mathbb Z} a_{n_1,n_2} e^{2\pi i (n_1
\frac \xi {2^{j+{m'}-C}}+n_2 \frac \eta {2^{m'}\Delta_j})},
\end{align}
where
\begin{equation}\label{ee9}
|a_{n_1,n_2}| \leq C_N (1+n_1^2+n_2^2)^{-N} 2^{-{m'}N}\quad
\textrm{for any}\quad N\in\mathbb N.
\end{equation}
Then
\begin{align}\label{to9}
\sup_{j\in \mathbb Z} \left| \int\! \widehat f(\xi)e^{2\pi i \xi x}
\widehat\Psi(\frac{\xi}{2^{j+{m'}-C}})    e^{2\pi i n_1 \frac \xi
{2^{j+{m'}-C}}}           \,\mathrm{d}\xi\right| \leq C(1+n_1^2)
M(f)(x)
\end{align}
and
\begin{align}\label{to8}
\sup_{j\in \mathbb Z} \left| \int\! \widehat g(\eta)e^{2\pi i \eta
x}\widehat\Phi(\frac{\eta}{2^{m'} \Delta_j}) e^{ 2\pi i {n_2 \frac
\eta {2^{m'}\Delta_j}}}
    \,\mathrm{d}\eta\right| \leq C(1+n_2^2) M(g)(x).
\end{align}
In the statements (\ref{to9}) and (\ref{to8}), we have applied the
following fact
\begin{align*}
\sup_{t>0}|f\ast \Omega_t(x-\frac n t)| \leq C_{\Omega}(1+n^2)
M(f)(x),
\end{align*}
where
\begin{align*}
\Omega_t(x) = t\Omega(tx).
\end{align*}
Then (\ref{th00}), (\ref{bb_X}), (\ref{ee9}), (\ref{to9}) and
(\ref{to8}) yield
\begin{align*}
\sup_{j>L} |M_{j,-,+}(f,g)(x)| \leq   C M(f)(x)M(g)(x),
\end{align*}
as desired.

%
%
%
%
%
%
%
%
%
%
%
%
%
%


\subsection*{Acknowledgments}
We would like to express our sincere gratitude to Xiaochun Li for
valuable advice and helpful discussion.


\end{document}